\def\XXint#1#2#3{{\setbox0=\hbox{$#1{#2#3}{\int}$}
\vcenter{\hbox{$#2#3$}}\kern-.5\wd0}}
\newtheorem{theorem}{Theorem}
\newtheorem{proposition}{Proposition}
\newtheorem{lemma}{Lemma}
\newtheorem{corollary}{Corollary}
\theoremstyle{remark}
\def\({\left(}
\def\){\right)}
\def\be {\begin{equation}}
\def\en{\end{equation}}
\def\Cdot{{\dot{C}}}
\def\div{\text{div}~}
\newcommand{\tensor}{\otimes}
\numberwithin{equation}{section}
\begin{document}


\subjclass[2010]{Primary 76D05, 35A02; Secondary 35K58}

\keywords{Navier-Stokes, Lagrangian Averaging, global existence, Besov spaces}

\address{Nathan Pennington, Department of Mathematics, Kansas State University, 138 Cardwell Hall,
Manhattan, KS-66506, USA.} \email{npenning@math.ksu.edu}


\title[Global solutions to the LANS in $B^{n/p}_{p,q}(\mathbb{R}^n)$]{Global Solutions to the Lagrangian Averaged Navier-Stokes equation in low regularity Besov spaces}
\author{Nathan Pennington}
\date{\today}

\begin{abstract} The Lagrangian Averaged Navier-Stokes (LANS) equations are a recently derived approximation to the Navier-Stokes equations.  Existence of global solutions for the LANS equation has been proven for initial data in the Sobolev space $H^{3/4,2}(\mathbb{R}^3)$ and in the Besov space $B^{3/2}_{2,q}(\mathbb{R}^3)$.  In this paper, we use an interpolation based method to prove the existence of global solutions to the LANS equation with initial data in $B^{3/p}_{p,q}(\mathbb{R}^3)$ for any $p>3$.  

\end{abstract}

\maketitle

\section{Introduction and Main Results}

The LANS equation is a recently derived approximation to the Navier-Stokes equation and is derived by averaging at the Lagrangian level.  For an exhaustive treatment of this process, see \cite{Shkoller}, \cite{SK}, \cite{MRS} and \cite{MS2}. In \cite{MKSM} and \cite{CHMZ}, the authors discuss the numerical improvements that use of the LANS equation provides over more common approximation techniques of the Navier-Stokes equation.

On $\mathbb{R}^n$, the isotropic, incompressible form of the LANS equation is given by 
\begin{equation}\label{LANS}\aligned\partial_t w+(w\cdot\nabla)w+\div
\tau^\alpha (w,w)=-(1-\alpha^2\triangle)^{-1}\text{grad}~p+\nu\triangle
w
\\ w:[0,T)\times\mathbb{R}^n\rightarrow \mathbb{R}^n, w(0,x)=w_0(x), ~~ \div w=\div w_0=0,
\endaligned
\end{equation}
where all the differential operators (except $\partial_t$) are spatial differential operators, $\alpha>0$ is a constant, $\nu>0$ is the viscosity of the fluid, $p$ denotes the fluid pressure, and $w_0$ is the initial data. The Reynolds
stress $\tau^\alpha(w,w)$ is given by
\begin{equation}\label{tau}\tau^\alpha (f,g)=\frac{\alpha^2}{2}(1-\alpha^2\triangle)^{-1}[Def(f)\cdot
Rot(g)+Def(f)\cdot Rot(g)]
\end{equation}
where $Rot(f)=(\nabla f-\nabla f^T)/2$ and $Def(f)=(\nabla f+\nabla f^T)/2$.  Abusing notation, we set $\tau^\alpha(f,f)=\tau^\alpha(f)$.  We note that setting $\alpha=0$ returns the Navier-Stokes equation.

The difference between the LANS equation and the Navier-Stokes equation is the additional nonlinear term $\tau^\alpha$.  This additional term complicates local existence theory, but makes it easier to control the long time behavior of local solutions.  Local existence results for the LANS equation in various settings can be found in \cite{Shkoller}, \cite{MRS}, \cite{MS}, \cite{sobpaper} and \cite{besovpaper}.  In \cite{MS}, Marsden and Shkoller proved the existence of a global solution to the LANS equation with initial data in the Sobolev space $H^{3,2}(\mathbb{R}^3)$.  In \cite{sobpaper}, this result was improved, achieving global existence for data in the space $H^{3/4,2}(\mathbb{R}^3)$.  In \cite{besovpaper}, existence of local solutions was proven for initial data in Besov spaces, and the local solution is extended to a global solution for initial data in $B^{s}_{2,q}(\mathbb{R}^3)$ for $s>3/4$. 

In this article we prove new global existence results to the LANS equation, guided by the method used by Gallagher and Planchon in \cite{galplan} (which has its origins in \cite{calns}) for the Navier-Stokes equation, which will be outlined below.  We now state the main result of this article.
\begin{theorem}\label{main theorem}Let $w_0\in B^{3/p}_{p,q}(\mathbb{R}^3)$ be divergence free.  Then there exists a unique global solution to the LANS equation $w\in C([0,\infty):B^{3/p}_{p,q}(\mathbb{R}^3))$ with $w(0)=w_0$, provided $p>2$.    
\end{theorem}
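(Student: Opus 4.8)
The plan is to adapt the interpolation scheme of Gallagher--Planchon \cite{galplan}: produce a local-in-time solution in $B^{3/p}_{p,q}$ together with a blow-up criterion, split the initial data into a smooth part governed by the global theory already available plus a small part, and show that the difference of the two solutions solves a perturbed LANS system which stays controlled for all time. The first step is local well-posedness of \eqref{LANS} in $B^{3/p}_{p,q}(\mathbb{R}^3)$, $p>2$, extending \cite{besovpaper}. Applying the Leray projection $\mathbb{P}$ onto divergence-free fields to remove the pressure and passing to the mild formulation,
\[
w(t)=e^{\nu t\Delta}w_0-\int_0^t e^{\nu(t-s)\Delta}\,\mathbb{P}\big[(w\cdot\nabla)w+\div\tau^\alpha(w,w)\big](s)\,ds ,
\]
one runs a contraction-mapping argument in a Chemin--Lerner space $\widetilde{L}^\infty_T B^{3/p}_{p,q}\cap\widetilde{L}^1_T B^{2+3/p}_{p,q}$, using Bernstein inequalities, the smoothing of the heat semigroup (which gains two derivatives in the $\widetilde{L}^1_T$ norm), and Bony's paraproduct decomposition for the products; since $\tau^\alpha$ carries the smoothing multiplier $(1-\alpha^2\Delta)^{-1}$, which offsets two of its derivatives, the term $\div\tau^\alpha(w,w)$ is of the same order as $(w\cdot\nabla)w$ and is handled the same way. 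This yields a unique maximal solution $w\in C([0,T^\ast);B^{3/p}_{p,q})$ together with the blow-up alternative: if $T^\ast<\infty$ then a suitable subcritical space-time norm of $w$ on $[0,T^\ast)$ — for instance $\|w\|_{\widetilde{L}^2([0,T^\ast);B^{1+3/p}_{p,q})}$ — must be infinite. It suffices to contradict this.

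Fix a divergence-free $w_0\in B^{3/p}_{p,q}$ and suppose $T^\ast<\infty$. For $\delta>0$, use density of divergence-free Schwartz fields in $B^{3/p}_{p,q}$ (valid when $q<\infty$; the case $q=\infty$ needs a separate approximation step) to write $w_0=a_\delta+b_\delta$ with $a_\delta$ Schwartz and divergence free and $\|b_\delta\|_{B^{3/p}_{p,q}}<\delta$. Since $a_\delta\in H^s(\mathbb{R}^3)$ for every $s$, the global existence result in $B^s_{2,q}$, $s>3/4$ (from \cite{besovpaper}, or the $H^{3/4}$ result of \cite{sobpaper}), together with persistence of regularity, furnishes a unique global solution $\bar w$ of \eqref{LANS} with $\bar w(0)=a_\delta$ that is smooth on $(0,\infty)\times\mathbb{R}^3$. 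The LANS energy identity
\[
\tfrac{d}{dt}\big(\|\bar w\|_{L^2}^2+\alpha^2\|\nabla\bar w\|_{L^2}^2\big)+2\nu\big(\|\nabla\bar w\|_{L^2}^2+\alpha^2\|\Delta\bar w\|_{L^2}^2\big)=0
\]
gives $\bar w\in L^\infty([0,\infty);H^1)\cap L^2([0,\infty);H^2)$, and bootstrapping through the equation upgrades this, on every bounded time interval, to $\bar w\in C([0,T];B^{3/p}_{p,q})\cap\widetilde{L}^2([0,T];B^{1+3/p}_{p,q})$ with finite norm. Setting $v=w-\bar w$ on $[0,T^\ast)$ and subtracting the two copies of \eqref{LANS} gives the perturbed system
\[
\partial_t v-\nu\Delta v+\mathbb{P}\big[(v\cdot\nabla)v+(v\cdot\nabla)\bar w+(\bar w\cdot\nabla)v+\div\big(\tau^\alpha(v,v)+\tau^\alpha(v,\bar w)+\tau^\alpha(\bar w,v)\big)\big]=0 ,
\]
with $v(0)=b_\delta$.

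The remaining, and main, task is to bound $v$ in $B^{3/p}_{p,q}$ on $[0,T^\ast)$: then $\|w\|\le\|\bar w\|+\|v\|$ is bounded there, contradicting the blow-up alternative and forcing $T^\ast=\infty$, after which continuity of $w$ into $B^{3/p}_{p,q}$ and uniqueness follow from the mild formulation and the local theory. One estimates $v$ in $\widetilde{L}^\infty B^{3/p}_{p,q}\cap\widetilde{L}^1 B^{2+3/p}_{p,q}$ over subintervals: the terms linear in $v$ are bounded by $\|v\|$ times a norm of $\bar w$ that is small on short subintervals (by absolute continuity of $t\mapsto\int_0^t\|\bar w(s)\|_{B^{1+3/p}_{p,q}}^2\,ds$), while the quadratic terms give $\|v\|^2$; covering $[0,T^\ast)$ by finitely many such subintervals and iterating controls $\|v\|$ there. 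The hard part is the bookkeeping in this iteration: $b_\delta$ is only small, whereas $\bar w$ carries large norms that grow without bound as $\delta\to0$ — indeed the energy $\|a_\delta\|_{L^2}^2+\alpha^2\|\nabla a_\delta\|_{L^2}^2$ itself blows up, since $w_0\notin H^1$ in general for $p>2$ — so the partition must be arranged, following \cite{galplan}, so that the relevant space-time norms of $\bar w$ are measured in a \emph{critical} (rather than energy) scaling, in which they are comparable to $\|w_0\|_{B^{3/p}_{p,q}}$ uniformly in $\delta$, making the total multiplicative loss over the partition summable and the final bound on $\|v\|$ finite once $\delta$ is small. A likely further ingredient, to cover the full range $p>2$ uniformly, is to prove the theorem first for $p$ near $2$ (where the smooth part can be placed in $B^s_{2,q}$ and the argument reduces to \cite{besovpaper}) and then raise the admissible exponent in short steps. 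Throughout, a secondary technical point is that $B^{3/p}_{p,q}$ fails to be an algebra when $q>1$, which forces systematic use of the heat smoothing and of $\widetilde{L}^2$-in-time norms in place of $\widetilde{L}^\infty$-in-time norms.
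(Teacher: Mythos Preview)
Your proposal follows the Gallagher--Planchon splitting idea, as does the paper, but you have \emph{reversed the roles of the two pieces}, and this is where the difficulty you yourself flag comes from. In the paper, the decomposition is $w_0=u_0+v_0$ with $u_0\in U=B^{3/2}_{2,q}$ (the high-regularity piece, large) and $v_0\in V=B^{3/\tilde p}_{\tilde p,q}$ for some $\tilde p\gg p$ (the low-regularity piece, \emph{small}). The small piece $v_0$ is then evolved by the \emph{original} LANS equation, and small-data global existence in $V$ (already known from \cite{besovpaper}) gives a global $v$. The perturbed equation (called mLANS) is written for the \emph{large} piece $u$, and global existence for mLANS is established in $U=B^{3/2}_{2,q}$ via genuine $L^2$-based energy estimates (Lemmas~\ref{mlanssobglobal1} and~\ref{mlansglobalthm2}): the LANS cancellations at the $H^1$ and $H^2$ level, together with the bounds on $v$ that are \emph{as small as one likes} in $B^{3/\tilde p}_{\tilde p,q}$, close a Gronwall argument. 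Finally, one returns to $W=B^{3/p}_{p,q}$ not by any limiting procedure but by the real-interpolation identity $W=[U,V]_{\theta,q}$ and the abstract Gallagher--Planchon lemma (Section~\ref{an abstract interpolation result}): since $u(t)\in U$ and $v(t)\in V$ with the right norm control, $u(t)+v(t)\in W$ for all $t$, and by uniqueness this is $w(t)$.

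Your version puts the \emph{small} piece $b_\delta$ in the target space $W$ itself and the large Schwartz piece $a_\delta$ in $H^s$, and then tries to control the perturbation $v=w-\bar w$ directly in $W$ by iterating over subintervals. As you correctly diagnose, this forces the space-time norms of $\bar w$ that appear as coefficients in the iteration to be uniform in $\delta$, while the $H^1$ energy of $a_\delta$ blows up as $\delta\to0$. You propose to cure this by measuring $\bar w$ at critical scaling, but you do not say how to obtain such a $\delta$-uniform critical bound for the LANS flow of $a_\delta$; for 2D Navier--Stokes this comes from the Leray energy inequality being itself critical, which is not the situation here. The paper sidesteps this entirely: because the perturbed equation is solved in $U=B^{3/2}_{2,q}$ for a \emph{fixed} $u_0$ (no limit is taken), the constants in the energy argument are allowed to depend on $\|u_0\|_U$, and the smallness is carried only by $v$ in the rougher space $V$. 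The interpolation step then replaces your subinterval bookkeeping and your suggested induction on $p$. In short: your gap is exactly where you place it, and the paper's remedy is to swap which piece is small and to use $W=[U,V]_{\theta,q}$ rather than a $\delta\to0$ limit.
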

This result expands on the global existence result from \cite{besovpaper}, which only held in the case $p=2$.  The primary emphasis here is the large $p$ case, where we obtain global existence for data with regularity close to zero.

The rest of this section is devoted to proving Theorem $\ref{main theorem}$, up to Theorem $\ref{mLANS main theorem}$, the proof of which is the focus of the rest of the article.  We start with our solution space $W=B^{3/p}_{p,q}(\mathbb{R}^3)$, and define $U=B^{3/2}_{2,q}(\mathbb{R}^3)$ and $V=B^{3/\tilde{p}}_{\tilde{p},q}(\mathbb{R}^3)$, where $\tilde{p}>>p$.  Then, choosing $\theta$ such that $3/p=3\theta/2+3(1-\theta)/\tilde{p}$, we have that  
\begin{equation}\label{interp}W=[U,V]_{\theta,q}.
\end{equation}
For our given $w_0\in W$, this means there exists $u_0\in U$ and $v_0\in V$ such that $w_0=u_0+v_0$.  We can also choose $\|v_0\|_V$ to be arbitrarily small.  By one of the results of \cite{besovpaper} (recalled in Section $\ref{Besov spaces}$ below as Theorem $\ref{old local theorem}$) there exists a unique global solution $v(t)\in V$ to the LANS equation with initial data $v_0$.  This result also provides a unique local solution $w$ to the LANS equation such that $w(t)\in W$ and $w(0)=w_0$.  

With this global solution $v$ to the LANS equation, the next step is to derive the following modified version of the LANS equation:
\begin{equation*}\aligned \partial_t u-\triangle u+&\div(u\tensor u+u\tensor v+v\tensor u)+\div(\tau^\alpha(u,u)+2\tau^\alpha(u,v)), 
\\ u(0)&=u_0, ~~ \div u=\div u_0=0,
\endaligned
\end{equation*}
where we recall that $\tau^\alpha$ is defined in equation $(\ref{tau})$.  We will refer to this as the mLANS equation, and it is derived by replacing $u$ in $(\ref{LANS})$ with $u+v$.  This process is explicitly detailed in the beginning of Section $\ref{Local Solutions}$. 

Now that the mLANS equation has been defined, we require the following result.
\begin{theorem}\label{mLANS main theorem}For any $u_0\in U$, there exists a unique global solution $u\in C([0,\infty):U)$ to the mLANS equation. 
\end{theorem}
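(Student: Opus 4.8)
The plan is to follow the classical three-step route to global existence: local well-posedness in $C([0,T]:U)$, a global-in-time a priori estimate, and a continuation argument, treating the perturbation by the known function $v$ throughout via product estimates in the Besov scale.

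\emph{Local existence.} Writing the mLANS equation in mild form,
\begin{multline*}
u(t)=e^{\nu t\triangle}u_0\\
-\int_0^t e^{\nu(t-s)\triangle}\,\mathbb{P}\,\div\big(u\tensor u+u\tensor v+v\tensor u+\tau^\alpha(u,u)+2\tau^\alpha(u,v)\big)(s)\,ds,
\end{multline*}
with $\mathbb{P}$ the Leray projector onto divergence-free fields (the term $-(1-\alpha^2\triangle)^{-1}\text{grad}\,p$ being a gradient, $\mathbb{P}$ annihilates it), I would run a contraction mapping argument in a Kato-type space built over $C([0,T]:U)$ together with an auxiliary smoothing norm, exactly along the lines of the local theory for the LANS equation in \cite{besovpaper}. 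The genuinely new terms $\div(u\tensor v+v\tensor u)$ and $2\div\tau^\alpha(u,v)$ are linear in $u$ with coefficient the \emph{given} function $v\in C([0,\infty):V)$; they are controlled by Besov product estimates pairing $U=B^{3/2}_{2,q}$ against $V=B^{3/\tilde p}_{\tilde p,q}$, using the two orders of smoothing gained from $(1-\alpha^2\triangle)^{-1}$ inside $\tau^\alpha$, and their only effect is to shrink the existence time by a factor depending on $\sup_{[0,T]}\|v(t)\|_V$. This yields a maximal solution on a maximal interval $[0,T^\ast)$.

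\emph{Global a priori bound.} This is the heart of the matter and the step I expect to be the main obstacle. Since the mLANS nonlinearity is precisely the LANS nonlinearity at $u$ plus the $v$-dependent terms, the LANS energy identity carries over: pairing the equation with $(1-\alpha^2\triangle)u$ in $L^2$, the quadratic-in-$u$ part $\div(u\tensor u)+\div\tau^\alpha(u,u)$ cancels by the same computation used in \cite{MS} and \cite{sobpaper}, leaving
\begin{align*}
\tfrac{d}{dt}\big(\|u\|_{L^2}^2+\alpha^2\|\nabla u\|_{L^2}^2\big)&+\nu\big(\|\nabla u\|_{L^2}^2+\alpha^2\|\triangle u\|_{L^2}^2\big)\\
&=-2\big\langle\div(u\tensor v+v\tensor u)+2\div\tau^\alpha(u,v),\,(1-\alpha^2\triangle)u\big\rangle.
\end{align*}
After integration by parts and interpolation, the right-hand side should be dominated by $(\varepsilon+C\,\Phi(v(t)))\big(\|\nabla u\|_{L^2}^2+\alpha^2\|\triangle u\|_{L^2}^2\big)+C\,\Psi(v(t))\big(\|u\|_{L^2}^2+\alpha^2\|\nabla u\|_{L^2}^2\big)$, where $\Phi$ and $\Psi$ are built from mixed space-time norms of $v$ and $\nabla v$ that are controlled by $\|v_0\|_V$ via the small-data global theory for $v$ (Theorem~\ref{old local theorem}). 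Because $\|v_0\|_V$ was chosen arbitrarily small, the $\Phi$-term is absorbed into the dissipation and either $\sup_t\Psi(v(t))$ is small or $\int_0^\infty\Psi(v(t))\,dt<\infty$, so Gronwall's inequality gives $\sup_{t\geq0}\big(\|u(t)\|_{L^2}^2+\alpha^2\|\nabla u(t)\|_{L^2}^2\big)\leq C(\|u_0\|_U)$. The delicate point is that $u$ lives in an $L^2$-based space whereas $v$ lives in an $L^{\tilde p}$-based space with $\tilde p$ large, so $\nabla v$ has \emph{negative} regularity; the cross terms must therefore be estimated using the $(1-\alpha^2\triangle)^{-1}$ smoothing and the paraproduct/commutator structure of $\langle(v\cdot\nabla)u,(1-\alpha^2\triangle)u\rangle$ rather than by crude H\"older bounds, all while keeping the global-in-time smallness of $v$ in play.

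\emph{Bootstrap and continuation.} With the energy-level bound in hand, and using $H^1(\mathbb{R}^3)\hookrightarrow L^6(\mathbb{R}^3)$, I would feed this control back into the mild formulation and, through parabolic smoothing together with the same Besov product estimates, propagate the bound up to $\|u(t)\|_{B^{3/2}_{2,q}}$ on $[0,T^\ast)$; because $3/2>1$ this bootstrap is subcritical once the energy is controlled, and it is where most of the technical work of the remaining sections goes. If $T^\ast<\infty$, the resulting bound shows $\limsup_{t\to T^\ast}\|u(t)\|_U<\infty$, so the local existence theorem applied near $T^\ast$ extends the solution past $T^\ast$, a contradiction; hence $T^\ast=\infty$. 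Finally, $u\in C([0,\infty):U)$ follows from the mild formulation and strong continuity of the heat semigroup on $U$, and uniqueness is inherited from the local contraction by the standard open-and-closed argument on $\{t:u_1(t)=u_2(t)\}$.
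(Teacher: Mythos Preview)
Your three-step skeleton (local existence via contraction, an $H^1$-level energy identity with cancellation of the quadratic-in-$u$ terms, then a bootstrap and continuation) is exactly the architecture the paper uses, and your description of the local step matches the paper's Theorem~\ref{new local version 34} closely. Two points deserve comment, one a genuine oversight and one a genuine difference in route.

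\textbf{The cross terms in the energy estimate.} You flag as the ``delicate point'' that $v$ lives in $B^{3/\tilde p}_{\tilde p,q}$ with $\tilde p$ large, so $\nabla v$ has negative regularity, and you propose to handle the pairings $\langle (v\cdot\nabla)u,(1-\alpha^2\triangle)u\rangle$ etc.\ by paraproduct/commutator structure. The paper does \emph{not} do this, and you are making your life much harder than necessary. The paper's Corollary~\ref{old local extension} (parabolic smoothing for the LANS solution $v$) gives $v(t)\in B^{r}_{p,q}$ for \emph{every} $r$ once $t>0$, hence $\sup_{a\le t<T^\ast}\|v(t)\|_{H^{r,p}}\le N$ for any fixed $a>0$ and any $r$; see equation~$(\ref{uniform v bound})$. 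With this in hand, the cross terms $I_4,I_5,I_6$ in Lemma~\ref{mlanssobglobal1} are bounded by crude H\"older/Sobolev estimates using $\|v\|_{L^p}$, $\|\nabla v\|_{L^\infty}$, $\|\triangle v\|_{L^p}$ directly; the only place smallness of $v$ is needed is to make $C\|v(t)\|_{L^p}-1<0$ so that the $\alpha^2\|u\|_{\dot H^2}^2$ piece coming from $I_5$ is absorbed by the dissipation. Without invoking the higher regularity of $v$, the term $(u\,\triangle v,\nabla u)$ arising from $I_5$ does not obviously close, and your paraproduct route would have to recover two derivatives on $v$ from somewhere. This is the one real gap in your outline; it is repaired not by more delicate harmonic analysis but by using Corollary~\ref{old local extension} and running the energy argument on $[a,T^\ast)$ rather than $[0,T^\ast)$.

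\textbf{The bootstrap from $H^1$ to $B^{3/2}_{2,q}$.} Here you and the paper part ways. You propose to feed the $H^1$ bound back into the mild formulation and use a singular Gronwall argument; this is viable, since with $\|u\|_{H^1}\le M$ one has $\|u\otimes u\|_{B^{1-\varepsilon}_{2,q}}\lesssim M\|u\|_{B^{3/2}_{2,q}}$ (Proposition~\ref{new product estimate} with $s_1=1$, $s_2=3/2-\varepsilon$), so the heat-kernel exponent is $3/4+\varepsilon/2<1$ and the resulting Volterra inequality closes on any finite interval. The paper instead proves a \emph{second} energy estimate at the $\dot H^{2}$ level (Lemma~\ref{mlansglobalthm2}): it applies $A=-\triangle$ to the equation, pairs with $Au$, and uses the Ladyzhenskaya interpolation inequality~$(\ref{AgLa})$ together with the $H^1$ bound to push every term to a power of $\|u\|_{\dot H^3}$ strictly below $2$, then absorbs via Young's inequality into the dissipation $\|u\|_{\dot H^3}^2$. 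This yields $\sup_{[a,T^\ast)}\|u\|_{\dot H^2}<\infty$ and hence (by the embedding $H^2\hookrightarrow B^{3/2}_{2,q}$) the desired bound. The paper's route requires an extra ingredient you do not need: it must first know $u(t)\in H^{3}$ to justify the $\dot H^2$ computation, and this is supplied by a separate higher-regularity lemma for the local solution (Corollary~\ref{cor34}/Lemma~\ref{higher reg}), proved by a Kato-style induction on the smoothing index. Your mild-formulation bootstrap sidesteps that lemma at the cost of a singular Gronwall; the paper's energy route gives a cleaner (and time-uniform on $[a,T^\ast)$) bound at the cost of the extra regularity step.
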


Proving Theorem $\ref{mLANS main theorem}$ will be the primary task of the rest of the article.  For now, assuming Theorem $\ref{mLANS main theorem}$, we proceed with the proof of Theorem $\ref{main theorem}$.  By the construction of the mLANS equation, because $u$ is a global solution to the mLANS equation, we have that $u+v$ is a global solution to the LANS equation, and that $u(0)+v(0)=u_0+v_0=w_0$.  We also have a unique local solution $w$ to the LANS equation with $w(0)=w_0$ and $w(t)\in W$.  By uniqueness, if $u(t)+v(t)\in W$ for all $t$, then $u(t)+v(t)=w(t)$ for all $t$, and the proof of Theorem $\ref{main theorem}$ will be complete.  

So our last remaining task is to show that $u(t)+v(t)\in W$ for all $t$, and this is a special case of a general interpolation result found in \cite{galplan} which will be presented at the end of Section $\ref{a priori sobolev}$.  The key requirement for this result is that $U\hookrightarrow W\hookrightarrow V$.  Using Besov embedding (see equation $(\ref{besov embedding})$), this holds for $U=B^{3/2}_{2,q}(\mathbb{R}^3)$, $V=B^{3/\tilde{p}}_{\tilde{p},q}(\mathbb{R}^3)$, and $W=B^{3/p}_{p,q}(\mathbb{R}^3)$.  Satisfying this embedding relation is the reason we do not use the optimal existence results from \cite{besovpaper} for the interpolation, since $B^{3/4+\varepsilon}_{2,q}(\mathbb{R}^3)$ does not inject into $B^{3/2p+\varepsilon}_{p,q}(\mathbb{R}^3)$.

This completes the proof of Theorem $\ref{main theorem}$, up to proving Theorem $\ref{mLANS main theorem}$, which is the focus of the remainder of the article.  In Section $\ref{Besov spaces}$ we recall the basic construction of Besov spaces, some standard Besov space estimates, and local and global existence theorems from \cite{besovpaper}.  The mLANS equation is derived and local solutions for the mLANS equation are constructed in Section $\ref{Local Solutions}$, and the extension to a global result is the focus of Section $\ref{extension to global existence}$ and Section $\ref{a priori sobolev}$.

\section{Besov Spaces}\label{Besov spaces}

We begin by defining the Besov spaces $B^s_{p,q}(\mathbb{R}^n)$.  Let $\psi_0\in\mathcal{S}$ be an even, radial function with Fourier transform $\hat{\psi_0}$ that has the following properties:
\begin{equation*}\aligned &\hat{\psi_0}(x)\geq 0
\\  &\text{support~}\hat{\psi_0}\subset A_0:=\{\xi\in \mathbb{R}^n:2^{-1}<|\xi|<2\}
\\ &\sum_{j\in\mathbb{Z}} \hat{\psi_0}(2^{-j}\xi)=1, ~\text{for all}~ \xi\neq 0.
\endaligned
\end{equation*}

We then define 
$\hat{\psi_j}(\xi)=\hat{\psi}_0(2^{-j}\xi)$ (from Fourier inversion, this also means $\psi_j(x)=2^{jn}\psi_0(2^jx)$), and remark that $\hat{\psi_j}$ is supported in $A_j:=\{\xi\in\mathbb{R}^n:2^{j-1}<|\xi|<2^{j+1}\}$.  We also define $\Psi$ by 
\begin{equation}\label{low freq part}\hat{\Psi}(\xi)=1-\sum_{k=0}^\infty \hat{\psi}_k(\xi).
\end{equation}

We define the Littlewood Paley operators $\triangle_j$ and $S_j$ by
\begin{equation*}\triangle_j f=\psi_j\ast f, \quad
S_jf=\sum_{k=-\infty}^{j}\triangle_k f,
\end{equation*}
and record some properties of these operators.  Applying the Fourier Transform and
recalling that $\hat{\psi}_j$ is supported on $2^{j-1}\leq
|\xi|\leq2^{j+1}$, it follows that   
\begin{equation}\aligned \label{besovlemma1}\triangle_j\triangle_k f= 0, \quad |j-k|\geq 2
\\ \triangle_j (S_{k-3}f\triangle_{k}g)= 0 \quad |j-k|\geq 4,
\endaligned
\end{equation}
and, if $|i-k|\leq 2$, then 
\begin{equation}\label{besovpieces67}\triangle_j(\triangle_kf\triangle_i g)=0 \quad j>k+4.
\end{equation}

For $s\in\mathbb{R}$ and $1\leq p,q\leq \infty$ we define
the space $\tilde{B}^s_{p,q}(\mathbb{R}^n)$ to be the set of distributions such that 
\begin{equation*}\|u\|_{\tilde{B}^s_{p,q}}=\(\sum_{j=0}^\infty (2^{js}\|\triangle_j
u\|_{L^p})^q\)^{1/q}<\infty,
\end{equation*}
with the usual modification when $q=\infty$.  Finally, we define the Besov spaces $B^s_{p,q}(\mathbb{R}^n)$ by the norm 
\begin{equation*}\|f\|_{B^s_{p,q}}=\|\Psi*f\|_p+\|f\|_{\tilde{B}^s_{p,q}},
\end{equation*}
for $s>0$.  For $s>0$, we define $B^{-s}_{p',q'}$ to be the dual
of the space $B^s_{p,q}$, where $p',q'$ are the Holder-conjugates to
$p,q$.

These Littlewood-Paley operators are also used to define Bony's paraproduct.  We have 
\begin{equation}\label{lp start}fg=\sum_{k} S_{k-3}f\triangle_k g + \sum_{k}S_{k-3}g\triangle_k f+ \sum_{k}\triangle_k f\sum_{l=-2}^2 \triangle_{k+l} g.
\end{equation}

The estimates $(\ref{besovlemma1})$ and $(\ref{besovpieces67})$ imply that 
\begin{equation}\aligned \label{bony256}\triangle_j (fg)\leq &\sum_{k=-3}^3 \triangle_j (S_{j+k-3}f\triangle_{j+k} g)+ \sum_{k=-3}^3 \triangle_j (S_{j+k-3}g\triangle_{j+k} f)
\\ +&\sum_{k>j-4}\triangle_j \(\triangle_k f\sum_{l=-2}^2 \triangle_{k+l}g\).
\endaligned
\end{equation}
This calculation will be very useful in Section $\ref{A Modified Product Estimate}$.

Now we turn our attention to establishing some basic Besov space estimates.  First, we let $1\leq q_1\leq q_2\leq \infty$, $\beta_1\leq \beta_2$, $1\leq p_1\leq p_2\leq\infty$, $\gamma_1=\gamma_2+n(1/p_1-1/p_2)$, and $r>s>0$.  Then we have the following:
\begin{equation}\label{besov embedding}\aligned \|f\|_{B^{\beta_1}_{p,q_2}}&\leq C\|f\|_{B^{\beta_2}_{p,q_1}},
\\ \|f\|_{B^{\gamma_2}_{p_2,q}}&\leq C\|f\|_{B^{\gamma_1}_{p_1,q}},
\\ \|f\|_{H^{s,p}}&\leq \|f\|_{B^r_{p,q}}, 
\\ \|f\|_{H^{s,2}}&=\|f\|_{B^s_{2,2}}\leq \|f\|_{B^{r}_{2,q}}.
\endaligned
\end{equation}
These will be referred to as the Besov embedding results.  Next, we record a Leibnitz-rule type estimate.  This can be found in \cite{chemin}, and for the reader's convenience, the proof can be found in Section $\ref{A Modified Product Estimate}$.
\begin{proposition}\label{new product estimate} Let $f\in B^{s_1}_{p_1,q}(\mathbb{R}^n)$ and let $g\in B^{s_2}_{p_2,q}(\mathbb{R}^n)$.  Then, for any $p$ such that $1/p\leq 1/p_1+1/p_2$ and with $s=s_1+s_2-n(1/p_1+1/p_2-1/p)$,  we have 
\begin{equation*}\|fg\|_{B^s_{p,q}}\leq \|f\|_{B^{s_1}_{p_1,q}}\|g\|_{B^{s_2}_{p_2,q}},
\end{equation*}
provided $s_1<n/p_1$, $s_2<n/p_2$, and $s_1+s_2>0$.
\end{proposition}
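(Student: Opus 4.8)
The plan is to prove the estimate by decomposing $fg$ via Bony's paraproduct, exactly as recorded in $(\ref{lp start})$, and estimating each of the three pieces separately in the $\tilde{B}^s_{p,q}$ norm (plus the low-frequency term). The three pieces are the two paraproducts $T_fg=\sum_k S_{k-3}f\,\triangle_k g$ and $T_gf=\sum_k S_{k-3}g\,\triangle_k f$, and the remainder $R(f,g)=\sum_k\triangle_kf\sum_{l=-2}^2\triangle_{k+l}g$. For each piece I would apply $\triangle_j$, use the frequency-localization facts $(\ref{besovlemma1})$ and $(\ref{besovpieces67})$ to see which terms survive, then estimate the $L^p$ norm of $\triangle_j$ of the surviving terms using H\"older's inequality in the form $\|F G\|_{L^p}\le\|F\|_{L^{r_1}}\|G\|_{L^{r_2}}$ with $1/p=1/r_1+1/r_2$, the Bernstein inequalities to convert between $L^{p_i}$ and $L^{r_i}$ at a fixed dyadic frequency, and the boundedness of $S_{k-3}$ on $L^{p_i}$.

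For the paraproduct $T_fg$: by $(\ref{besovlemma1})$, $\triangle_j(S_{k-3}f\,\triangle_k g)=0$ unless $|j-k|\le 3$, so $\triangle_j(T_fg)$ is a finite sum over $k\sim j$. Here one uses $\|S_{k-3}f\|_{L^\infty}\lesssim\sum_{k'\le k-3}2^{k'n/p_1}2^{-k's_1}\cdot 2^{k's_1}\|\triangle_{k'}f\|_{L^{p_1}}$ via Bernstein, and the hypothesis $s_1<n/p_1$ makes the geometric series $\sum_{k'\le k}2^{k'(n/p_1-s_1)}$ converge, giving $\|S_{k-3}f\|_{L^\infty}\lesssim 2^{k(n/p_1-s_1)}\|f\|_{B^{s_1}_{p_1,q}}$ (crude but sufficient; more carefully one keeps a summable sequence $c_k$ with $\ell^q$ norm $\|f\|_{B^{s_1}_{p_1,q}}$). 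Then $\|\triangle_j(T_fg)\|_{L^p}\lesssim\sum_{k\sim j}2^{k(n/p_1-s_1)}\|f\|_{B^{s_1}_{p_1,q}}\|\triangle_kg\|_{L^{p_2\to p}}$, and after inserting the Bernstein factor $2^{kn(1/p_2-1/p)}$ to pass from $L^{p_2}$ to $L^p$ (which needs $1/p\le 1/p_2$, a consequence of $1/p\le 1/p_1+1/p_2$ together with — wait, one must be slightly careful; in fact one uses $\|\triangle_k g\|_{L^p}$ directly is not available, so one embeds $B^{s_2}_{p_2,q}\hookrightarrow B^{s_2-n(1/p_2-1/p)}_{p,q}$ first when $p_2\le p$, or handles the other case by putting $g$ in $L^\infty$ instead — I would split the argument by whether $S_{k-3}f$ or $\triangle_k g$ goes into $L^\infty$ depending on the signs of $s_1-n/p_1$ etc.), one multiplies by $2^{js}$, recognizes $s=s_1+s_2-n(1/p_1+1/p_2-1/p)$, and takes the $\ell^q_j$ norm, using that convolution of an $\ell^1$ sequence (from the finite $k\sim j$ sum, or a summable geometric sequence) with an $\ell^q$ sequence is $\ell^q$ by Young's inequality. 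The term $T_gf$ is symmetric, using $s_2<n/p_2$.

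For the remainder $R(f,g)$: by $(\ref{besovpieces67})$, $\triangle_j(\triangle_kf\,\triangle_{k+l}g)=0$ for $j>k+4$, so $\triangle_j(R(f,g))=\sum_{k\ge j-4}\triangle_j(\triangle_kf\sum_l\triangle_{k+l}g)$ — this is the one unbounded sum and hence the crux of the argument. Here one estimates $\|\triangle_j(\triangle_kf\,\triangle_{k+l}g)\|_{L^p}\lesssim 2^{jn(1/r-1/p)}\|\triangle_kf\|_{L^{p_1}}\|\triangle_{k+l}g\|_{L^{p_2}}$ where $1/r=1/p_1+1/p_2\ge 1/p$ (this is where the hypothesis $1/p\le 1/p_1+1/p_2$ enters: Bernstein lets us lower the Lebesgue exponent from $r$ to $p$ at frequency $2^j$), then multiplies by $2^{js}$, writes it as $2^{j(s+n/p-n/r)}\cdot 2^{-k s_1}\cdot 2^{-ks_2}\cdot(2^{ks_1}\|\triangle_kf\|_{L^{p_1}})(2^{ks_2}\|\triangle_{k+l}g\|_{L^{p_2}})$, and observes that $s+n/p-n/r=s_1+s_2$, so the $2^{j}$ power is $2^{j(s_1+s_2)}$ with $j\le k+4$; the hypothesis $s_1+s_2>0$ then makes $\sum_{j\le k+4}2^{j(s_1+s_2)}\lesssim 2^{k(s_1+s_2)}$ converge, which exactly cancels the $2^{-k(s_1+s_2)}$ and leaves $\sum_k (c_k d_k)$ — a product of two $\ell^q$ sequences summed, controlled by $\|f\|_{B^{s_1}_{p_1,q}}\|g\|_{B^{s_2}_{p_2,q}}$ via H\"older (or more precisely one keeps the $\ell^q_j$ structure and uses Young's inequality in $j$ before summing in $k$). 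Finally the low-frequency term $\|\Psi*(fg)\|_{L^p}$ is handled by the same H\"older-plus-Bernstein estimate localized near frequency $1$, absorbed into constants. The main obstacle, as indicated, is organizing the remainder estimate so that the three conditions $s_1<n/p_1$, $s_2<n/p_2$, $s_1+s_2>0$ are each used precisely once to guarantee convergence of the relevant geometric sum, and keeping the $\ell^q$ bookkeeping honest (using Young's convolution inequality rather than crude $\ell^\infty$ bounds) so the final norm is genuinely $\|f\|_{B^{s_1}_{p_1,q}}\|g\|_{B^{s_2}_{p_2,q}}$ and not a weaker mixed-norm quantity.
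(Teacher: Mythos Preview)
Your proposal is correct and takes essentially the same approach as the paper: Bony's paraproduct decomposition, $L^\infty$ bounds on $S_{k-3}f$ via Bernstein (using $s_1<n/p_1$), the symmetric estimate for $T_gf$ (using $s_2<n/p_2$), and the remainder handled by Bernstein at scale $2^j$ together with the condition $s_1+s_2>0$ for the geometric sum in $k\ge j$. The only cosmetic difference is that the paper obtains the Bernstein-type factor by applying Young's inequality with $\|\psi_j\|_{L^{\tilde p}}\le C2^{jn/\tilde p'}$ rather than quoting Bernstein directly, and it organizes the $\ell^q$ step by first writing a single inequality for $\|\triangle_j(fg)\|_{L^p}$ and then multiplying by $2^{js}$ and summing; but the mechanism and the role of each hypothesis are exactly as you describe.
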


Our third result is the Bernstein inequalities (see Appendix $A$ in \cite{taonde}).  We let $A=(-\triangle)$, $\alpha\geq 0$, and $1\leq p\leq q\leq\infty$.  If $\text{supp}~\hat{f}\subset\{\xi\in\mathbb{R}^n:|\xi|\leq
2^jK\}$ and $\text{supp}~\hat{g}\subset\{\xi\in\mathbb{R}^n:2^jK_1\leq |\xi|\leq
2^jK_2\}$ for some $K, K_1, K_2>0$ and some integer $j$, then
\begin{equation}\label{Bernstein}\aligned \tilde{C}2^{j\alpha+jn(1/p-1/q)}\|g\|_p&\leq \|A^{\alpha/2}g\|_q\leq
C2^{j\alpha+jn(1/p-1/q)}\|g\|_p.        
\\ \|A^{\alpha/2} f\|_q&\leq C2^{j\alpha+jn(1/p-1/q)}\|f\|_p
\endaligned 
\end{equation}

Our last Besov space estimate governs the behavior of the heat kernel on Besov spaces.

\begin{proposition}\label{hkb} Let $1\leq p_1\leq p_2<\infty$,
$-\infty<s_1\leq s_2<\infty$, and let $0<q<\infty$.  Then
\begin{equation*}\|e^{t\triangle} f\|_{B^{s_2}_{p_2,q}}\leq
Ct^{-(s_2-s_1+n/p_1-n/p_2)/2}\|f\|_{B^{s_1}_{p_1,q}},
\end{equation*}
provided $0<t<1$.
\end{proposition}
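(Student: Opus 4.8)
The plan is to prove Proposition \ref{hkb} by decomposing $f$ into Littlewood--Paley pieces, commuting the heat semigroup with the frequency projections, and applying the Bernstein inequalities on each dyadic annulus. First I would write $\|e^{t\triangle}f\|_{B^{s_2}_{p_2,q}} = \|\Psi * e^{t\triangle}f\|_{p_2} + \big(\sum_{j\geq 0}(2^{js_2}\|\triangle_j e^{t\triangle}f\|_{p_2})^q\big)^{1/q}$ and treat the two summands separately. The key pointwise input is the standard kernel estimate: for a function $g$ with $\hat g$ supported in an annulus $|\xi|\sim 2^j$, one has $\|e^{t\triangle}g\|_p \leq C e^{- c t 2^{2j}}\|g\|_p$, obtained by writing $e^{t\triangle}\triangle_j = \phi_j(t,\cdot)*$ with $\phi_j$ built from a fixed Schwartz function rescaled, and checking that its $L^1$ norm is $\lesssim e^{-ct2^{2j}}$ uniformly; a parallel estimate $\|e^{t\triangle}\Psi*f\|_{p} \le C \|\Psi*f\|_p$ handles the low-frequency block since $\hat\Psi$ is supported near the origin.

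Next, for the high-frequency part I would combine this exponential decay with the Bernstein gain in integrability. Since $\triangle_j f$ has Fourier support in $|\xi|\sim 2^j$, inequality (\ref{Bernstein}) gives $\|\triangle_j e^{t\triangle}f\|_{p_2} \lesssim 2^{jn(1/p_1 - 1/p_2)}\|\triangle_j e^{t\triangle}f\|_{p_1} \lesssim 2^{jn(1/p_1-1/p_2)} e^{-ct2^{2j}}\|\triangle_j f\|_{p_1}$. Therefore
\begin{equation*}
2^{js_2}\|\triangle_j e^{t\triangle}f\|_{p_2} \leq C\, 2^{j(s_2 - s_1 + n/p_1 - n/p_2)} e^{-ct2^{2j}}\, 2^{js_1}\|\triangle_j f\|_{p_1}.
\end{equation*}
Writing $\sigma = s_2 - s_1 + n/p_1 - n/p_2 \ge 0$, I would bound $\sup_{j\ge 0} 2^{j\sigma} e^{-ct2^{2j}} \le C t^{-\sigma/2}$ for $0<t<1$ (the function $x\mapsto x^{\sigma/2}e^{-cx}$ is bounded, applied with $x = t2^{2j}$). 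Pulling this supremum out of the $\ell^q$ sum over $j$ leaves exactly $\big(\sum_j (2^{js_1}\|\triangle_j f\|_{p_1})^q\big)^{1/q} \le \|f\|_{B^{s_1}_{p_1,q}}$, giving the desired $t^{-\sigma/2}$ power. The low-frequency block contributes $\|\Psi * e^{t\triangle}f\|_{p_2} \lesssim \|\Psi*f\|_{p_2} \lesssim \|\Psi * f\|_{p_1} \le \|f\|_{B^{s_1}_{p_1,q}}$ (using that $\Psi * f$ has compact Fourier support so Bernstein upgrades $p_1$ to $p_2$), and since $0<t<1$ this is dominated by $t^{-\sigma/2}$ times the same quantity. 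Adding the two contributions yields the stated bound.

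The main obstacle I anticipate is purely technical: establishing the uniform-in-$j$ kernel bound $\|e^{t\triangle}\triangle_j\|_{L^p\to L^p} \lesssim e^{-ct2^{2j}}$ with the exponential rate, which requires a careful scaling argument (substitute $\xi \mapsto 2^j\xi$, $t\mapsto 2^{2j}t$ to reduce to the $j=0$ case, then estimate $\|\mathcal F^{-1}(e^{-s|\xi|^2}\hat\psi_0)\|_{L^1}$ with explicit $s$-dependence). One also needs to be mildly careful that the hypothesis $p_2 < \infty$ (and $p_1 \le p_2$, $s_1 \le s_2$, so $\sigma \ge 0$) is exactly what makes the single power $t^{-\sigma/2}$ suffice rather than needing a sum of powers; if $\sigma$ could be negative the supremum over $j\ge 0$ would instead be bounded by a constant, but the hypotheses rule that out. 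Everything else is bookkeeping with the $\ell^q$ norm and the triangle inequality.
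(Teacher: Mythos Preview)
Your proposal is correct. It takes a slightly different route from the paper's own argument. The paper reduces immediately to the known Sobolev-space heat kernel estimate $\|e^{t\triangle}g\|_{H^{s_2-s_1,p_2}}\leq Ct^{\sigma}\|g\|_{L^{p_1}}$: after expanding the Besov norm it identifies $2^{j(s_2-s_1)}\|\triangle_j e^{t\triangle}f\|_{p_2}$ with $\|e^{t\triangle}\triangle_j f\|_{H^{s_2-s_1,p_2}}$ via Bernstein, applies the Sobolev estimate to each piece, and sums in $\ell^q$. You instead stay at the frequency-localized level, using the explicit exponential bound $\|e^{t\triangle}\triangle_j\|_{L^p\to L^p}\lesssim e^{-ct2^{2j}}$ and recovering the power of $t$ from $\sup_{j\ge 0}2^{j\sigma}e^{-ct2^{2j}}\leq Ct^{-\sigma/2}$. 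Your argument is more self-contained (no black-box Sobolev heat estimate) and is the standard proof one sees in the Besov-space literature; the paper's version is shorter precisely because it delegates the $t$-dependence to that existing result. Either approach is acceptable here.
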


Using the Sobolev space heat kernel estimate, we get, for $0<t<1$, 
\begin{equation*}\aligned \|e^{t\triangle}f\|_{B^{s_2}_{p_2,q}}
&=\|\Psi*e^{t\triangle} f\|_{L^{p_2}}+\(\sum
(2^{js_1}\|2^{j(s_2-s_1)}\triangle_j e^{t\triangle}f\|_{L^{p_2}})^q\)^{1/q}
\\ &\leq t^{(n/p_1-n/p_2)/2}\|\Psi*f\|_{L^{p_1}}+\(\sum
(2^{js_1}\|e^{t\triangle}\triangle_j f\|_{H^{s_2-s_1,p_2}})^q\)^{1/q}
\\ &\leq t^{-(n/p_1-n/p_2)/2}\|\Psi * f\|_{L^{p_1}}+t^{\sigma}\(\sum
(2^{js_1}\|\triangle_j * f\|_{L^{p_1}})^q\)^{1/q}
\\ &\leq t^{\sigma}\|f\|_{B^{s_1}_{p_1,q}}.
\endaligned
\end{equation*}
where $\sigma=-(s_2-s_1+n/p_1-n/p_2)/2$, and we made liberal use of the fact that $e^{t\triangle}$ commutes with convolution operators.
We remark that a straightforward density argument can be used to show that, for any $\varepsilon$, 
\begin{equation}\label{hkb ext}\sup_{0\leq t<T}t^{(s_2-s_1+n/p_1-n/p_2)/2}\|e^{t\triangle} f\|_{B^{s_2}_{p_2,q}}<\varepsilon,
\end{equation}
where $T$ depends only on $\|f\|_{B^{s_1}_{p_1,q}}$.

We conclude this section with the local existence result for the LANS equation from \cite{besovpaper}, but first we define two function spaces.  The first is $BC([0,T):B^{r}_{p,q}(\mathbb{R}^n)$.  We say that $u\in C(X:Y)$ if $u$ is a continuous function from the Banach space $X$ to the Banach space $Y$.  We say that $u\in BC(X:Y)$ if, in addition, $u$ is bounded.  So $u\in BC([0,T):B^{r}_{p,q}(\mathbb{R}^n)$ means that $u$ is continuous from the time interval $[0,T)$ into $B^r_{p,q}(\mathbb{R}^n)$ and  
\begin{equation}\label{defn of bc}\sup_{0\leq t<T}\|u(t)\|_{B^r_{p,q}}:=\|u\|_{0;r,p,q}=M<\infty.
\end{equation}

For $T>0$ and $a\geq 0$, the space $C^T_{a;s,p,q}$ is defined by 
\begin{equation*}C^T_{a;s,p,q}=\{f\in
C((0,T):B^s_{p,q}(\mathbb{R}^n)):\|f\|_{a;s,p,q}<\infty\},
\end{equation*}
where
\begin{equation*}\|f\|_{a;s,p,q}=\sup\{t^a\|f(t)\|_{s,p,q}:t\in
(0,T)\}.
\end{equation*}
We let $\Cdot^T_{a;s,p,q}$ denote the subspace of $C^T_{a;s,p,q}$ consisting
of $f$ such that
\begin{equation*}\lim_{t\rightarrow 0^+}t^a f(t)=0
~\text{(in}~B^s_{p,q}(\mathbb{R}^n)).
\end{equation*}
Note that while the norm $\|\cdot\|_{a;s,p,q}$ lacks an explicit reference to $T$, there is an implicit $T$ dependence. 
Finally, we state a local existence theorem for the LANS equation. This result is a special case of Theorem $4$ in \cite{besovpaper}.

\begin{theorem}\label{old local theorem}Let $v_0\in B^{n/p}_{p,q}(\mathbb{R}^n)$ be divergence free, where $p>n$, and let $r$ satisfy $n/p<r<n/p+1$.  Then there exists a time $T$ and a unique solution $v$ to the LANS equation $(\ref{LANS})$ such that 
\begin{equation*}v\in BC([0,T):B^{n/p}_{p,q}(\mathbb{R}^n)\cap \dot{C}^T_{\frac{r-n/p}{2};r,p,q},
\end{equation*}
with $v(0)=v_0$.  We remark that the time $T$ depends only on $\|v_0\|_{B^{n/p}_{p,q}}$, and for sufficiently small $\|v_0\|_{B^{n/p}_{p,q}}$, $T=\infty$.  Furthermore, for a given $T^*<\infty$ and a given real number $\varepsilon$, if $\|v_0\|_{B^{n/p}_{p,q}}$ is sufficiently small, then $\sup_{0\leq t<T^*}\|v(t)\|_{B^{n/p}_{p,q}}<\varepsilon$.  
\end{theorem}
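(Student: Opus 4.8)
The plan is to recast the LANS equation in its integral (mild) form and to build the solution by a contraction argument in the space $X_T = BC([0,T):B^{n/p}_{p,q}(\mathbb{R}^n)) \cap \dot{C}^T_{a;r,p,q}$, where $a = (r-n/p)/2 \in (0,\tfrac12)$ and $T \le 1$. Applying the Leray projection $\mathbb{P}$ onto divergence-free vector fields eliminates the pressure (the operator $(1-\alpha^2\triangle)^{-1}$ sitting in front of $\text{grad}~p$ is harmless once $\mathbb{P}$ is applied), and since $\div v = 0$ we may write $(v\cdot\nabla)v = \div(v\tensor v)$; thus a solution is a fixed point of
\begin{equation*}\aligned \Phi(v)(t) &= e^{\nu t\triangle}v_0 - \int_0^t e^{\nu(t-s)\triangle}\,\mathbb{P}\,\div\left(v\tensor v + \tau^\alpha(v,v)\right)(s)\,ds
\\ &=: e^{\nu t\triangle}v_0 + B(v,v)(t).
\endaligned\end{equation*}
First I would verify that the linear term lies in $X_T$: membership in $BC([0,T):B^{n/p}_{p,q})$ is immediate from the boundedness of $e^{t\triangle}$ on Besov spaces (Proposition \ref{hkb} with $s_1 = s_2$) plus a density argument for continuity at $t = 0$, while Proposition \ref{hkb} with $s_1 = n/p$, $s_2 = r$, $p_1 = p_2 = p$ gives $t^a\|e^{\nu t\triangle}v_0\|_{B^r_{p,q}} \le C\|v_0\|_{B^{n/p}_{p,q}}$; by $(\ref{hkb ext})$ the $\dot{C}^T_{a;r,p,q}$-norm of $e^{\nu t\triangle}v_0$ can moreover be made as small as we wish by shrinking $T$, with $T$ depending only on $\|v_0\|_{B^{n/p}_{p,q}}$.

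The core of the argument is the bilinear estimate $\|B(u,v)\|_{X_T} \le C\,T^{\delta}\|u\|_{X_T}\|v\|_{X_T}$ for some $\delta > 0$. For the Navier--Stokes-type contribution, since $r > n/p$ the space $B^r_{p,q}(\mathbb{R}^n)$ is a Banach algebra under pointwise multiplication (see \cite{chemin}), so $\|(u\tensor v)(s)\|_{B^r_{p,q}} \le C\,s^{-2a}\|u\|_{X_T}\|v\|_{X_T}$; for the $\tau^\alpha$ contribution one uses that $(1-\alpha^2\triangle)^{-1}$ maps $B^{\sigma}_{p,q}$ boundedly into $B^{\sigma+2}_{p,q}$, so that --- estimating the bracket, which is bilinear in $\nabla u$ and $\nabla v$, via Proposition \ref{new product estimate} (after, if one prefers, an integration by parts exploiting $\div u = 0$) --- the two-derivative smoothing of $(1-\alpha^2\triangle)^{-1}$ more than compensates the derivatives and $\div\tau^\alpha(u,v)$ ends up no rougher than $\div(u\tensor v)$ and obeys the same weighted bound. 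Since $\mathbb{P}\,\div$ costs exactly one derivative, Proposition \ref{hkb} then yields
\begin{equation*}\|B(u,v)(t)\|_{B^{n/p}_{p,q}} \le C\int_0^t (t-s)^{-(n/p+1-r)/2}\,s^{-2a}\,ds\;\|u\|_{X_T}\|v\|_{X_T},\end{equation*}
together with the analogous bound for $t^a\|B(u,v)(t)\|_{B^r_{p,q}}$ (now with heat-kernel exponent $\tfrac12$). The time integrals converge because $n/p < r < n/p+1$ forces $2a = r-n/p < 1$ and each heat-kernel exponent to be $< 1$, and they evaluate to a constant times $t^{\delta}$ with $\delta = (n/p+1-r)/2 > 0$; taking suprema over $t \in (0,T)$ gives the stated bilinear bound.

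With the linear and bilinear estimates in hand, the Banach fixed point theorem applied to $\Phi$ on a small ball of $X_T$ produces, as soon as $T$ is small enough that $C\,T^{\delta}(\|v_0\|_{B^{n/p}_{p,q}}+1)$ is bounded by a fixed absolute constant --- hence for a $T$ depending only on $\|v_0\|_{B^{n/p}_{p,q}}$ --- a unique solution $v \in X_T$; uniqueness in the full class and continuity of $t\mapsto v(t)$ in $B^{n/p}_{p,q}$ follow from the contraction estimate and the integral equation, and the limit condition $t^a v(t)\to 0$ placing $v$ in the $\dot{C}^T$-subspace comes from $(\ref{hkb ext})$ together with the $t^{\delta}$ gain in $B(v,v)$. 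The small-data global statement follows by iterating the local construction: when $\|v_0\|_{B^{n/p}_{p,q}}$ is small the solution's $B^{n/p}_{p,q}$-norm remains small on $[0,T]$, so the construction restarts with the same time-step and covers all of $[0,\infty)$; the final assertion, $\sup_{0\le t<T^*}\|v(t)\|_{B^{n/p}_{p,q}} < \varepsilon$ for small data, then follows by feeding the smallness of $e^{\nu t\triangle}v_0$ from $(\ref{hkb ext})$ through the bilinear estimate. The main obstacle is the bilinear estimate: one must choose the auxiliary parameters $r \in (n/p,n/p+1)$ and $a = (r-n/p)/2$ so that the algebra property of $B^r_{p,q}$, the product law of Proposition \ref{new product estimate}, and the heat-kernel smoothing of Proposition \ref{hkb} all apply simultaneously and still leave a strictly positive power of $T$, and one must check that the extra term $\div\tau^\alpha$ --- despite carrying two derivatives of the unknowns --- never dominates the Navier--Stokes nonlinearity; the complete parameter bookkeeping is carried out in \cite{besovpaper}, the present theorem being a special case of its Theorem 4.
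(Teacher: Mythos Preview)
The paper does not prove this theorem at all: it simply states it and cites it as a special case of Theorem~4 in \cite{besovpaper}. Your contraction-mapping sketch in the Kato-type space $X_T = BC([0,T):B^{n/p}_{p,q})\cap \dot C^T_{a;r,p,q}$ is the standard approach and is correct in outline --- indeed it is essentially the same argument the paper carries out in detail for the mLANS local result (Theorem~\ref{new local version 34}), with the same ingredients (Proposition~\ref{hkb}, Proposition~\ref{new product estimate}, the smoothing of $(1-\alpha^2\triangle)^{-1}$, and the $T^\delta$ gain from the time integral) --- and you correctly note at the end that the full parameter bookkeeping is in \cite{besovpaper}.
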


The result can be extended in the following fashion.  
\begin{corollary}\label{old local extension}Let $v_0\in B^{n/p}_{p,q}(\mathbb{R}^n)$ be divergence free, and let $v$ be the solution given in the above theorem.  Then the requirement that $r<n/p+1$ can be removed.
\end{corollary}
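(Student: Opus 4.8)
The plan is to bootstrap the regularity of the solution $v$ given by Theorem~\ref{old local theorem} via the integral (Duhamel) formulation of the LANS equation. Write the LANS equation in mild form as
\begin{equation*}
v(t)=e^{t\nu\triangle}v_0-\int_0^t e^{(t-s)\nu\triangle}\mathbb{P}\,\mathcal{N}(v(s))\,ds,
\end{equation*}
where $\mathbb{P}$ is the Leray projector and $\mathcal{N}(v)=\div(v\tensor v)+\div\tau^\alpha(v,v)$ collects all the nonlinear terms (absorbing the pressure into $\mathbb{P}$ and using that $(1-\alpha^2\triangle)^{-1}$ is a smoothing Fourier multiplier of order $-2$). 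The solution already lies in $BC([0,T):B^{n/p}_{p,q})\cap \dot C^T_{\frac{r-n/p}{2};r,p,q}$ for every $r$ with $n/p<r<n/p+1$, so the task is to show that membership in $\dot C^T_{a;r,p,q}$ persists for $r\ge n/p+1$, with $a=(r-n/p)/2$ as before.

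First I would fix a target exponent $r\ge n/p+1$ and argue by a finite induction, increasing the regularity index in steps of size strictly less than $1$ (say steps of $1/2$), so that at each stage the new index $r'$ satisfies $n/p<r'<r''+1$ where $r''$ is the previously-attained index. The inductive step estimates the two terms on the right. For the linear term, Proposition~\ref{hkb} (and its Sobolev-scale analogue, together with \eqref{hkb ext}) gives $\|e^{t\nu\triangle}v_0\|_{B^{r'}_{p,q}}\lesssim t^{-(r'-n/p)/2}\|v_0\|_{B^{n/p}_{p,q}}$, with the $\dot C$-limit at $t\to 0^+$ coming from \eqref{hkb ext}. For the Duhamel integral, I would apply the heat-kernel smoothing of Proposition~\ref{hkb} to gain the extra $1/2$ derivative off the term $\div(\cdot)$ and then use the product estimate Proposition~\ref{new product estimate} to control $\|v(s)\tensor v(s)\|$ and $\|\tau^\alpha(v(s),v(s))\|$ in the appropriate Besov norm in terms of $\|v(s)\|_{B^{r''}_{p,q}}\|v(s)\|_{B^{n/p}_{p,q}}$, using the already-established $\dot C^T_{(r''-n/p)/2;r'',p,q}$ bound and the $BC([0,T):B^{n/p}_{p,q})$ bound; the $\tau^\alpha$ terms are strictly easier because of the order $-2$ smoothing. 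The resulting time-integral $\int_0^t (t-s)^{-\beta}s^{-\gamma}\,ds$ is a Beta-function integral that converges and produces exactly the weight $t^{-(r'-n/p)/2}$, provided the step size was chosen $<1$ so that both $\beta<1$ and $\gamma<1$; the vanishing as $t\to 0^+$ needed for the $\dot C$ membership follows from the same computation together with the fact that the integrand's contribution near $s=0$ is controlled by $\|v\|_{(r''-n/p)/2;r'',p,q}$ times a quantity that can be made small, i.e.\ by splitting $\int_0^t=\int_0^{\delta t}+\int_{\delta t}^t$.

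The main obstacle is bookkeeping the admissibility conditions in Proposition~\ref{new product estimate} at each inductive stage: one needs $s_1<n/p_1$, $s_2<n/p_2$, and $s_1+s_2>0$ for the product $v\tensor v$, and since $v$ only has regularity $n/p$ in the low norm and $r''$ in the high norm, raising $r''$ past $n/p$ means the ``high--high'' interaction can violate $s_i<n/p_i$ unless one is careful to always pair a high-regularity factor with the low-regularity factor $v\in B^{n/p}_{p,q}$ (which sits exactly at the borderline $s=n/p$, not strictly below it). This is precisely why the increments must be taken small and why one works on the scale of $B^{r}_{p,q}$ with the \emph{same} integrability $p$ rather than trying to also move $p$; a mild refinement — estimating $v\tensor v$ in $B^{r-1}_{p,q}$ via $\|v\|_{B^{r}_{p,q}}\|v\|_{B^{n/p}_{p,\infty}}$-type bounds, or interpolating to stay strictly below the critical line — handles the borderline. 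Once the inductive step closes for one increment, iterating finitely many times reaches any prescribed $r$, and continuity in time on $[0,T)$ into $B^r_{p,q}$ follows from the mild formulation exactly as in the proof of Theorem~\ref{old local theorem}. Hence the restriction $r<n/p+1$ can be dropped.
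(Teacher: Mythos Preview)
Your direct bootstrap via the unmodified Duhamel formula has a genuine gap: it cannot push $r$ past $n/p+2$. At the inductive step you estimate the nonlinear integral using the already-established membership $v\in\dot C^T_{(r''-n/p)/2;r'',p,q}$, which contributes a factor $s^{-(r''-n/p)/2}$ to the integrand. The Beta integral $\int_0^t(t-s)^{-\beta}s^{-\gamma}\,ds$ with $\gamma=(r''-n/p)/2$ converges only when $\gamma<1$, i.e.\ only while $r''<n/p+2$. Taking small steps does not help: by scaling one always has $\beta+\gamma=1+(r'-n/p)/2$, so the joint requirement $\beta<1$ and $\gamma<1$ forces $r'<n/p+2$ no matter how small the increment $h=r'-r''$ is. After finitely many increments you hit a hard ceiling at $n/p+2$ and the induction stalls; your claim that ``iterating finitely many times reaches any prescribed $r$'' is therefore unjustified.

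The paper (following Kato) removes this obstruction by first passing to $w(t)=t^\delta v(t)$, deriving the PDE satisfied by $w$, applying Duhamel to $w$ (for which $w(0)=0$, so the free evolution drops out), and then substituting back $v=t^{-\delta}w$. The net effect is an extra factor $s^\delta$ inside the time integral and a compensating $t^{-\delta}$ outside, so the dangerous power $s^{-\gamma}$ is replaced by $s^{\delta-\gamma}$, which is integrable once $\delta$ is taken large enough (the required $\delta$ grows with the current regularity level, but that is harmless since $\delta$ is chosen anew at each inductive step). This is carried out in Lemma~\ref{higher reg}, where the paper explicitly notes that without the $t^\delta$ modification one would need $(k-n/2)/2<1$, which fails for large $k$. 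Your proposal is missing exactly this device.
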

The proof of a similar extension for solutions to the mLANS equation can be found in Lemma $\ref{higher reg}$ in Section $6$.  A more complete discussion of this type of result can also be found there.  

\section{Derivation of and Local Solutions to the mLANS equation}\label{Local Solutions}

We let $v(t)$ denote the solution to the LANS equation with $v(0)=v_0$ given by Theorem $\ref{old local theorem}$.  We seek a $u$ such that, defining $w(t)$ by $w(t)=u(t)+v(t)$, $w$ will solve the LANS equation.  This means 
\begin{equation*}\aligned &\partial_t (u+v)-\triangle (u+v) +\div((u+v)\tensor (u+v) + \tau^\alpha(u+v,u+v)).
\endaligned
\end{equation*}

Using the fact that $v$ satisfies the LANS equation, and requiring that $u(0)=u_0$ and $\div u=\div u_0=0$, this (essentially) simplifies to 
\begin{equation}\aligned \label{new pde}\partial_t u-\triangle u+&\div(u\tensor u+u\tensor v+v\tensor u)+\div(1-\alpha^2\triangle)^{-1}(\nabla u\nabla u+\nabla u\nabla v),
\\ &u(0)=u_0, ~~ \div u=\div u_0=0.
\endaligned
\end{equation}
This is not exact because of the second non-linear term.  There are actually several more terms involving products of $\nabla u$, $(\nabla u)^T$, $\nabla v$, and $(\nabla v)^T$ (but no terms involving only products of $\nabla v$ and $(\nabla v)^T$).  In most of the following calculations, the additional terms have no effect on our argument, and so will often be omitted.  We call equation $(\ref{new pde})$ the mLANS equation.  

Throughout the remainder of the article, we set the $v$ in the mLANS equation to be the small initial data solution to the LANS equation given by Theorem $\ref{old local theorem}$ with $p>>n$, which means $v$ is divergence-free and   
\begin{equation}\label{google}v\in \tilde{E}=BC([0,T):B^{n/p}_{p,q}(\mathbb{R}^n))\cap \dot{C}^T_{b;r,p,q},
\end{equation}
for any $r>n/p$ and any $T$, where $b=(r-n/p)/2$.  

\begin{theorem}\label{new local version 34}Let $u_0\in B^{n/2}_{2,q}(\mathbb{R}^n)$ be divergence free.  Then there exists a local solution $u$ to the mLANS equation $(\ref{new pde})$ such that 
\begin{equation*}u\in BC([0,T):B^{n/2}_{2,q}(\mathbb{R}^n))\cap \dot{C}^T_{a;s,2,q},
\end{equation*}
where $a=(s-n/2)/2$, $0<s-n/2<1$, and $T$ depends only on $\|u_0\|_{n/2,2,q}$. 
\end{theorem}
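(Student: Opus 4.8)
The plan is to prove Theorem~\ref{new local version 34} by the standard Picard/fixed-point scheme applied to the Duhamel (integral) formulation of the mLANS equation, working in the space $X_T = BC([0,T):B^{n/2}_{2,q}(\mathbb{R}^n))\cap \dot{C}^T_{a;s,2,q}$ with $a=(s-n/2)/2$ and $n/2<s<n/2+1$. Writing the mLANS equation in mild form, a solution satisfies
\begin{equation*}\aligned
u(t) = e^{t\triangle}u_0 &- \int_0^t e^{(t-t')\triangle}\,\mathbb{P}\,\div\big(u\tensor u + u\tensor v + v\tensor u\big)(t')\,dt'
\\ &- \int_0^t e^{(t-t')\triangle}\,\mathbb{P}\,\div(1-\alpha^2\triangle)^{-1}\big(\nabla u\nabla u + \nabla u\nabla v\big)(t')\,dt',
\endaligned
\end{equation*}
where $\mathbb{P}$ is the Leray projection (so the pressure term is absorbed); here I would suppress, as the author does, the harmless extra $\nabla u,(\nabla u)^T,\nabla v,(\nabla v)^T$ products. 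The first step is to control the linear term: by Proposition~\ref{hkb} and its extension \eqref{hkb ext}, $e^{t\triangle}u_0 \in \dot{C}^T_{a;s,2,q}$ with $\sup_{0\le t<T} t^a\|e^{t\triangle}u_0\|_{s,2,q}$ arbitrarily small for $T$ small depending only on $\|u_0\|_{n/2,2,q}$, and $e^{t\triangle}u_0\in BC([0,T):B^{n/2}_{2,q})$ trivially.

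The second step is the bilinear/trilinear estimates for the Duhamel integrals, which is the technical heart of the local theory. For the transport-type term $\div(u\tensor u)$ I would use Proposition~\ref{new product estimate} to bound the product $u\tensor u$ in an appropriate $B^\sigma_{2,q}$ space (noting $s=n/2+2a>0$ and $2s - n/2 > 0$ so the hypotheses $s_1,s_2<n/p_i$ and $s_1+s_2>0$ are met for suitable exponents), losing one derivative for the $\div$ and then recovering regularity via the heat kernel smoothing of Proposition~\ref{hkb}; the time-integrability of the resulting $t'$-weight (of the form $(t-t')^{-\beta}(t')^{-2a}$ with $\beta<1$ and $2a<1$) produces the required $t^a$ factor after integration, which is where $s-n/2<1$ is used. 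The mixed terms $u\tensor v$ and $v\tensor u$ are handled the same way, but now using that $v\in\tilde{E}$ with $\|v\|$ small, so these contributions come with a small constant (from $\|v\|_V$ small, cf. the remark after \eqref{google} and Theorem~\ref{old local theorem}); this smallness is what lets the mixed \emph{linear-in-$u$} terms be absorbed into the contraction rather than needing $T$ small alone. For the $\tau^\alpha$-type terms $\div(1-\alpha^2\triangle)^{-1}(\nabla u\nabla u + \nabla u\nabla v)$, the operator $(1-\alpha^2\triangle)^{-1}$ is a smoothing Fourier multiplier of order $-2$, so it more than compensates for the two derivatives in $\nabla u\nabla u$ and the one in $\div$; again Proposition~\ref{new product estimate} bounds the products and Proposition~\ref{hkb} supplies any remaining smoothing, so these terms are actually better-behaved than the Navier--Stokes-type terms.

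The third step is to assemble these estimates into a statement that the map $u\mapsto \Phi(u)$ defined by the right-hand side of the mild formulation maps a ball $B(0,R)\subset X_T$ into itself and is a contraction there, for $R$ and $T$ chosen appropriately (small $T$ for the quadratic-in-$u$ terms, small $\|v\|$ for the linear-in-$u$ terms). The Banach fixed point theorem then gives a unique $u\in X_T$, and a routine argument (continuity of the Duhamel terms in $t$, plus $e^{t\triangle}u_0\in C([0,T):B^{n/2}_{2,q})$) upgrades to $u\in BC([0,T):B^{n/2}_{2,q})\cap\dot{C}^T_{a;s,2,q}$, with the $\dot{C}$ (limit-zero) property inherited from \eqref{hkb ext} and from the fact that the nonlinear terms carry strictly positive powers of $t$. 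The main obstacle I anticipate is bookkeeping in the bilinear estimate: choosing the intermediate exponents $(s_1,s_2,p_1,p_2)$ in Proposition~\ref{new product estimate} and the heat-kernel exponents in Proposition~\ref{hkb} so that (i) the product estimate's hypotheses hold with the given $s$ in $(n/2,n/2+1)$, and (ii) the time singularities $(t-t')^{-\beta}(t')^{-2a}$ are jointly integrable with the correct residual power $t^a$ — this is precisely where the constraint $s-n/2<1$ enters and must be tracked carefully, and it is also why, as the author notes, the endpoint regularity $s=n/2+1$ (or $3/4+\varepsilon$ in the $p=2$ language) is excluded from this argument.
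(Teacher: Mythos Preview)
Your approach is the paper's: a Kato-type fixed point in $X_T = BC([0,T):B^{n/2}_{2,q})\cap\dot C^T_{a;s,2,q}$, using Proposition~\ref{new product estimate} for the products and Proposition~\ref{hkb}/\eqref{hkb ext} for the heat semigroup. The paper writes the map as $\Phi(u)=e^{t\triangle}u_0+\tilde\Phi(u)+\Psi(u,v)$ and puts all of its explicit work into the mixed piece $\Psi$ (your ``linear-in-$u$'' terms), splitting it into $I_1,I_2,J_1,J_2$ and showing each carries a strictly positive power of $T$; the pure-LANS quadratic piece $\tilde\Phi$ is dispatched in one line as $\|\tilde\Phi\|_E\le CM^2$.

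There is one point where your bookkeeping would become a real problem if carried out as written. You say ``small $T$ for the quadratic-in-$u$ terms, small $\|v\|$ for the linear-in-$u$ terms,'' but $B^{n/2}_{2,q}$ is critical: the quadratic bilinear estimate is scale-invariant and produces $CM^2$ with \emph{no} residual power of $T$ (in your own notation, the weight $(t-t')^{-\beta}(t')^{-2a}$ integrates to exactly $Ct^{-a}$ because $1-\beta-2a=-a$). So small $T$ does not touch the quadratic term. The paper's mechanism is the standard Kato one: first fix the ball radius $M$ small (an absolute constant) so that $CM^2<M/4$; then choose $T$ small, depending only on $\|u_0\|_{n/2,2,q}$, so that $\|e^{t\triangle}u_0\|_{a;s,2,q}<M/2$ via \eqref{hkb ext}; the mixed terms $\Psi$ are controlled by the positive powers $T^{1/2-\varepsilon}$ and $T^{1/2-n/4p}$ the paper extracts, not by smallness of $\|v\|$ (which the paper reserves for the global a~priori estimates in Section~\ref{a priori sobolev}). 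Your alternative of using $\|v\|$ small for the mixed terms would also close, but your plan for the quadratic terms must be corrected to rely on the ball radius $R$ rather than on $T$.
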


In the next section, we will extend this local solution to a global solution.  The following Corollary is instrumental in this task.
\begin{corollary}\label{cor34}The requirement in Theorem $\ref{new local version 34}$ that $s-n/2<1$ can be removed.
\end{corollary}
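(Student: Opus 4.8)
The plan is to bootstrap: starting from the local solution $u$ produced by Theorem \ref{new local version 34}, which lives in $BC([0,T):B^{n/2}_{2,q})\cap\dot{C}^T_{a;s,2,q}$ for some $s$ with $0<s-n/2<1$, I would show that $u$ in fact also lies in $\dot C^T_{a';s',2,q}$ for any $s'>n/2$, with $a'=(s'-n/2)/2$, on the same interval $[0,T)$ (possibly shrinking $T$ by a harmless amount, but not introducing any new smallness requirement). The mechanism is the Duhamel/integral formulation of the mLANS equation,
\begin{equation*}
u(t)=e^{t\triangle}u_0+\int_0^t e^{(t-\sigma)\triangle}\,\mathbb{P}\,\div\big(u\tensor u+u\tensor v+v\tensor u+(1-\alpha^2\triangle)^{-1}(\nabla u\nabla u+\nabla u\nabla v)\big)(\sigma)\,d\sigma,
\end{equation*}
combined with the heat-kernel smoothing estimate of Proposition \ref{hkb} and the product estimate of Proposition \ref{new product estimate}. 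The idea is that each application of the Duhamel integral, using the smoothing of $e^{(t-\sigma)\triangle}$ together with the extra derivative from $\div$ (and the two extra derivatives from $\nabla u\nabla u$ absorbed by $(1-\alpha^2\triangle)^{-1}$, which behaves like a gain of two derivatives away from low frequencies), lets us trade a bound in $B^s_{2,q}$ at time $\sigma$ for a bound in $B^{s'}_{2,q}$ at time $t$, at the cost of a weight $t^{-a'}$ that is exactly what the space $\dot C^T_{a';s',2,q}$ tolerates; the singularity $(t-\sigma)^{-\gamma}$ at $\sigma\to t$ is integrable as long as we raise the regularity by less than one derivative per step, so we reach any $s'$ after finitely many steps.

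Concretely, the steps would be: (1) Fix a target $s'>n/2$; choose a finite increasing chain $n/2<s<s_1<s_2<\dots<s_N=s'$ with each gap $s_{k+1}-s_k<1$ (so that the relevant time integral converges). (2) Assume inductively $u\in\dot C^T_{a_k;s_k,2,q}$ with $a_k=(s_k-n/2)/2$, the base case $k=0$ being Theorem \ref{new local version 34}. (3) In the Duhamel formula, estimate the linear term $e^{t\triangle}u_0$ in $B^{s_{k+1}}_{2,q}$ by Proposition \ref{hkb} (the weight $t^{-a_{k+1}}$ being exactly accounted for), and the nonlinear integral by: pulling $\mathbb{P}\div$ through $e^{(t-\sigma)\triangle}$, using Proposition \ref{hkb} to bound $\|e^{(t-\sigma)\triangle}\mathbb{P}\div(\cdot)\|_{B^{s_{k+1}}_{2,q}}\leq C(t-\sigma)^{-\beta}\|\cdot\|_{B^{s_k-1}_{2,q}}$ for an appropriate $\beta<1$ (and an analogous estimate for the $\tau^\alpha$ terms where the operator $(1-\alpha^2\triangle)^{-1}\div$ contributes additional smoothing), and bounding the quadratic factors by Proposition \ref{new product estimate} in terms of $\|u(\sigma)\|_{B^{s_k}_{2,q}}\|u(\sigma)\|_{B^{n/2}_{2,q}}$, $\|u(\sigma)\|_{B^{s_k}_{2,q}}\|v(\sigma)\|_{B^{n/p}_{p,q}}$, etc.; the time weights $\sigma^{-a_k}$ (for $u$) and, for the $v$-terms, either the boundedness of $v$ in $B^{n/p}_{p,q}$ or its weighted bound in $\dot C^T_{b;r,p,q}$ from (\ref{google}), combine with $(t-\sigma)^{-\beta}$ into a Beta-function integral equal to $C\,t^{-a_{k+1}}$ times the product of the norms. (4) Conclude $u\in\dot C^T_{a_{k+1};s_{k+1},2,q}$; the membership in $\dot C$ (i.e. $t^{a_{k+1}}\|u(t)\|_{s_{k+1},2,q}\to 0$ as $t\to0^+$) follows from (\ref{hkb ext}) for the linear term and from the fact that the nonlinear integral carries an extra positive power of $t$ (or from $t^{a}\|u(t)\|_{s,2,q}\to 0$ at the previous level). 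After $N$ steps we obtain $u\in\dot C^T_{a';s',2,q}$, as desired, so the hypothesis $s-n/2<1$ in Theorem \ref{new local version 34} is not needed.

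The main obstacle is bookkeeping the time-weight exponents so that the Beta integrals $\int_0^t(t-\sigma)^{-\beta}\sigma^{-c}\,d\sigma=B(1-\beta,1-c)\,t^{1-\beta-c}$ converge and produce precisely the exponent $-a_{k+1}=-(s_{k+1}-n/2)/2$ demanded by $\dot C^T_{a_{k+1};s_{k+1},2,q}$; this forces the one-derivative-per-step restriction (each gap $s_{k+1}-s_k<1$, equivalently keeping $\beta<1$ and $c<1$) and is the reason the argument must be iterated rather than done in a single stroke. A secondary technicality is the $\tau^\alpha$ nonlinearity: $\nabla u\nabla u$ is formally two derivatives worse than $u\tensor u$, but the operator $(1-\alpha^2\triangle)^{-1}$ in (\ref{tau}) recovers two derivatives at high frequency (via the Bernstein inequalities (\ref{Bernstein}) applied dyadic block by block, together with an elementary low-frequency estimate), so in the end these terms are no worse than the Navier-Stokes-type terms and are handled by the same scheme; I would note that the additional $\nabla v$-type terms suppressed in (\ref{new pde}) are estimated identically, using the regularity of $v$ from (\ref{google}).
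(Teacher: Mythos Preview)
Your bootstrap scheme has a genuine gap at the $\sigma\to 0$ endpoint of the Beta integral. In your step (3), when you bound the quadratic term via $\|u(\sigma)\|_{B^{s_k}_{2,q}}\|u(\sigma)\|_{B^{n/2}_{2,q}}$, the first factor carries the weight $\sigma^{-a_k}$ with $a_k=(s_k-n/2)/2$, so the integral you must control is $\int_0^t(t-\sigma)^{-\beta}\sigma^{-a_k}\,d\sigma$. Convergence at $\sigma=0$ requires $a_k<1$, i.e.\ $s_k<n/2+2$, and this condition is \emph{not} governed by the step size $s_{k+1}-s_k$, contrary to what you assert in your ``main obstacle'' paragraph. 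Taking small steps keeps $\beta<1$ (integrability at $\sigma=t$) but does nothing for $c=a_k$, which depends on the absolute regularity level already reached and eventually exceeds~$1$. Your induction therefore stalls at $s=n/2+2$ and cannot reach arbitrary $s'$. (Shrinking $T$ does not help either, since the divergence is at the lower limit of integration.)

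The paper circumvents exactly this obstruction via Kato's device (Lemma~\ref{higher reg}): set $w=t^\delta u$ for an arbitrary $\delta>0$, derive the equation for $w$ (it picks up an extra linear term $\delta t^{-1}w$ and has $w(0)=0$), apply Duhamel to $w$, and then substitute back $w=t^\delta u$. Every integrand thereby acquires an additional factor $s^\delta$, so the time weight in the nonlinear terms becomes $\sigma^{\delta-a_k}$; choosing $\delta$ large enough at each inductive step restores integrability at $\sigma=0$ regardless of how large $a_k$ is. The paper flags this point explicitly after estimating the term $I$: ``without modifying the PDE to include these $t^\delta$ terms, we would need $(k-n/2)/2$ to be less than $1$, which does not hold for large $k$.'' That is precisely the failure mode of your proposal.
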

The proof of the corollary follows from Lemma $\ref{higher reg}$ in Section $\ref{Extending the local result: Higher regularity}$.  This result (and its proof) is similar to Proposition $8$ in \cite{sobpaper}, which has its origins in an induction argument from \cite{katoinduction}.

The proof of Theorem $\ref{new local version 34}$ will follow from the standard contraction mapping method and heavy use of the results from Section $\ref{Besov spaces}$.  We begin by defining the nonlinear operator $\Phi$ by 
\begin{equation*}\Phi(u)=e^{t\triangle}u_0+\tilde{\Phi}(u)+\Psi(u,v),
\end{equation*}
where
\begin{equation*}\aligned \tilde\Phi(u)=\int_0^t e^{(t-s)\triangle}(V(u))ds
\\ \Psi(u,v)=\int_0^t e^{(t-s)\triangle}W(u,v)ds,
\endaligned
\end{equation*}
with $V$ and $W$ (essentially) given by  
\begin{equation*}\aligned V(u)=\div(u\tensor u)+\div(1-\triangle)^{-1}(\nabla u\nabla u),
\\ W(u,v)=\div(u\tensor v)+\div(1-\triangle)^{-1}(\nabla u\nabla v),
\endaligned
\end{equation*}
where, as above, the full definitions of $V$ and $W$ involve additional terms whose behavior is controlled by the terms shown.

We seek a fixed point of $\Phi$ in the space  
\begin{equation*}\aligned E&=\{f\in BC((0,T):B^{n/2}_{2,q}(\mathbb{R}^n))\cap \dot{C}^T_{\frac{s-n/2}{2};s,2,q}: 
\\ &\sup_t \|f-e^{t\triangle}u_0\|_{n/2,2,q}+\|f\|_{(s-n/2)/2;s,2,q}<M\}.
\endaligned \end{equation*}

We first show that $\Phi:E\rightarrow E$, and we begin by showing that $\Psi:E\rightarrow E$, which requires estimating   
\begin{equation}\aligned \label{dirk1}I=&I_1+I_2=\sup_{0\leq t<T}\|\int_0^t e^{(t-\tau)\triangle}\div(u\tensor v)d\tau\|_{B^{n/2}_{2,q}} \\ +&\sup_{0<t<T}t^a\|\int_0^t e^{(t-s)\triangle}\div(u\tensor v)d\tau\|_{B^{s}_{2,q}} 
\endaligned
\end{equation}
and 
\begin{equation}\aligned \label{dirk2} J=&J_1+J_2=\sup_{0\leq t<T}\|\int_0^t e^{(t-\tau)\triangle}\div(1-\triangle)^{-1}(\nabla u\nabla v)d\tau\|_{B^{n/2}_{2,q}}
\\ +&\sup_{0<t<T}t^a\|\int_0^t e^{(t-\tau)\triangle}\div(1-\triangle)^{-1}(\nabla u\nabla v)d\tau\|_{B^{s}_{2,q}}
\endaligned
\end{equation}

\subsection{Estimating $I$}
To bound $I_1$, we start by setting $\alpha=\alpha_1+\alpha_2$, where $\alpha_1=n/2-\varepsilon$ and $\alpha_2=n/p-\varepsilon$.  Then we have   
\begin{equation*}\|\int_0^t e^{(t-\tau)\triangle}\div(u\tensor v)d\tau\|_{B^{n/2}_{2,q}}
\leq \int_0^t |t-\tau|^{-({n/2}-(\alpha-1)+n/\bar{p}-n/2)/2}\|u\tensor v\|_{B^{\alpha}_{\bar{p},q}}d\tau,
\end{equation*}
where $n/\bar{p}=n/2+n/p$ and we used that $\alpha-1=n/2+n/p-2\varepsilon-1\leq {n/2}$ for $p>n$.  Using Proposition $\ref{new product estimate}$, we have 
\begin{equation*}\|u\tensor v\|_{B^{\alpha}_{\bar{p},q}}\leq \|u\|_{B^{\alpha_1}_{2,q}}\|v\|_{B^{\alpha_2}_{p,q}}\leq \|u\|_{B^{n/2}_{2,q}}\|v\|_{B^{n/p}_{p,q}}.
\end{equation*}

Returning to the integral, we have 
\begin{equation*}\aligned &\int_0^t |t-\tau|^{-({n/2}-(\alpha-1)+n/\bar{p}-n/2)/2}\|u(\tau)\tensor v(\tau)\|_{B^{\alpha}_{\bar{p},q}}d\tau
\\ \leq C&\int_0^t |t-\tau|^{-(1+2\varepsilon)/2}\|u(\tau)\|_{B^{n/2}_{2,q}}\|v(\tau)\|_{B^{n/p}_{p,q}}d\tau
\\ \leq C&\|u\|_{0;n/2,2,q}\|v\|_{0,n/p,p,q}\int_0^t |t-\tau|^{-(1+2\varepsilon)/2}d\tau
\\ \leq C&\|u\|_{0;n/2,2,q}\|v\|_{0,n/p,p,q}t^{-(1+2\varepsilon)/2+1},
\endaligned
\end{equation*}
provided $1+2\varepsilon<2$, which is easily satisfied for small $\varepsilon$.  From $(\ref{google})$, we know that $\|v\|_{0;n/p,p,q}$ is finite, so 
\begin{equation}\label{vet2}\aligned I_1&\leq \sup_{0\leq t<T}C\|u\|_{0;n/2,2,q}\|v\|_{0,n/p,p,q}t^{-(1+2\varepsilon)/2+1}
\\ &\leq CMT^{1/2-\varepsilon}.
\endaligned
\end{equation}

For $I_2$, recalling that $a=({s}-{n/2})/2$, a similar argument gives 
\begin{equation}\label{vet3}\aligned I_2&\leq \sup_{0<t<T}t^a\|\int_0^t e^{(t-\tau)\triangle}\div(u(\tau)\tensor v(\tau))d\tau\|_{B^{s}_{2,q}} 
\\ &\leq \sup_{0<t<T}t^{a}\int_0^t |t-\tau|^{-({s}-(\alpha-1)+n/\bar{p}-n/2)/2}\|u(\tau)\tensor v(\tau)\|_{B^\alpha_{\bar{p},q}} d\tau
\\ &\leq \sup_{0<t<T}Ct^{a}\|u\|_{0,n/2,2,q}\|v\|_{0,n/p,p,q}\int_0^t |t-\tau|^{-({s}-\alpha+1+n/p)/2}d\tau 
\\ &\leq \sup_{0<t<T}C\|u\|_{0,n/2,2,q}\|v\|_{0,n/p,p,q}t^{-({s}-\alpha+1+n/p)/2+1+a},
\\ &\leq CMT^{(1-n/p-n/2+\alpha)/2}\leq CMT^{1/2-\varepsilon},
\endaligned
\end{equation}
provided ${s}-\alpha+n/p<1$.  So we have that 
\begin{equation}\label{fifai}I=I_1+I_2<CMT^{1/2-\varepsilon},
\end{equation}
provided  
\begin{equation*}\aligned 1&>{s}-\alpha+n/p=s-n/2+2\varepsilon
\\ 1&\geq n/2-\alpha+n/p=2\varepsilon.
\endaligned
\end{equation*}
The first requirement is equivalent to $s-n/2<1$ and the second is vacuously satisfied.

\subsection{Estimating $J$}
For $J_1$, we have  
\begin{equation*}\aligned &\|\int_0^t e^{(t-\tau)\triangle}\div(1-\triangle)^{-1}(\nabla u(\tau) \nabla v(\tau))d\tau\|_{B^{n/2}_{2,q}}
\\ \leq &\int_0^t |t-\tau|^{-(n/\bar{p}-n/2)/2}\|\div(1-\triangle)^{-1}(\nabla u(\tau) \nabla v(\tau))\|_{B^{n/2}_{\bar{p},q}}d\tau
\\ \leq &\int_0^t |t-\tau|^{-(n/\bar{p}-n/2)/2}\|\nabla u(\tau) \nabla v(\tau)\|_{B^{n/2-1}_{\bar{p},q}}d\tau
\endaligned
\end{equation*}
where $n/\bar{p}=n/2+n/p$.  Setting $n/2-1=\beta_1+\beta_2$, where $\beta_1<n/2$ and $\beta_2<n/p$, and again using Proposition $\ref{new product estimate}$, we have 
\begin{equation*}\|\nabla u\nabla v\|_{B^{n/2-1}_{\bar{p},q}}\leq \|u\|_{B^{\beta_1+1}_{2,q}}\|v\|_{B^{\beta_2+1}_{p,q}}\leq \|u\|_{B^{n/2}_{2,q}}\|v\|_{B^{r}_{p,q}},
\end{equation*}
where $r\geq \beta_2+1$.    

Recalling that $b=({r}-n/p)/2$, we get that $J_1$ is bounded by 
\begin{equation*}\aligned J_1&\leq \sup_{0\leq t<T}\leq \int_0^t |t-\tau|^{-(n/\bar{p}-n/2)/2} \|\div(1-\triangle)^{-1}(\nabla u(\tau)\nabla v(\tau))\|_{B^{n/2}_{\bar{p},q}}d\tau
\\ &\leq \sup_{0\leq t<T}\int_0^t |t-\tau|^{-n/2p} \|u(\tau)\|_{B^{n/2}_{2,q}}\|v(\tau)\|_{B^{r}_{p,q}}d\tau
\\ &\leq \sup_{0\leq t<T}C\|u\|_{0;{n/2},2,q}\|v\|_{b;{r},p,q}\int_0^t |t-\tau|^{-n/2p}\tau^{-b}d\tau
\\ &\leq CMT^{-n/2p-b+1}=CMT^{1-r/2},
\endaligned
\end{equation*}
provided $b<1$ (recall that, by equation $(\ref{google})$, $\|v\|_{b;r,p,q}$ is bounded).

For $J_2$, we have 
\begin{equation*}\aligned J_2&\leq \sup_{0<t<T}t^a\|\int_0^t e^{(t-\tau)\triangle}\div(1-\triangle)^{-1}(\nabla u(\tau) \nabla v(\tau))d\tau\|_{B^{s}_{2,q}}
\\ &\leq \sup_{0<t<T}t^a\int_0^t |t-\tau|^{-({s}-n/2+n/\bar{p}-n/2)/2}\|\div(1-\triangle)^{-1}(\nabla u(\tau)\nabla v(\tau))\|_{B^{n/2}_{\bar{p},q}}d\tau
\\ &\leq C\sup_{0<t<T}t^a\|u\|_{0;n/2,2,q}\|v\|_{b,{r},p,q}\int_0^t |t-\tau|^{-({s}-n/2+n/p)/2}\tau^{-b}d\tau
\\ &\leq CMT^{1-r/2},
\endaligned
\end{equation*}
provided ${s}-n/2+n/p<2$ and $b<1$.  

Combining the restrictions, we get that 
\begin{equation*}J=J_1+J_2\leq CMT^{1-r/2},
\end{equation*}
provided  
\begin{equation*}\label{restrictions334}\aligned {n/2}&>\beta_1+1
\\ \beta_2&<n/p
\\ {r}&\geq \beta_2+1
\\ 2&>{s}-n/2+n/p
\\ 2&\geq {r}.
\endaligned
\end{equation*}

Setting $\beta_1=n/2-1-n/2p$, $\beta_2=n/2p$, and $r=\beta_2+1$, we get  
\begin{equation}\label{fifaj}J=J_1+J_2\leq CMT^{1/2-n/4p}
\end{equation}
provided $s-n/2<1$.    

\subsection{Finishing Theorem $\ref{new local version 34}$}
From equations $(\ref{fifai})$ and $(\ref{fifaj})$, we have that 
\begin{equation*}\|\Psi\|_E\leq I+J <CM(T^{1/2-n/4p}+T^{1/2-\varepsilon})<CMT^{1/2-n/4p},
\end{equation*}
for $p>>n$.  For $\tilde{\Phi}$, similar calculations yield 
\begin{equation*}\|\tilde{\Phi}\|_E\leq CM^2.
\end{equation*}
Thus 
\begin{equation*}\|\tilde{\Phi}\|_E+\|\Psi\|_E\leq M/2,
\end{equation*}
provided $M$ and $T$ are sufficiently small.  We remark that the size of $T$ required here depends only on the parameters and on constants, not on $u$ or $M$.  For the linear term $e^{t\triangle}u_0$, Proposition $\ref{hkb}$ and equation $(\ref{hkb ext})$ give that 
\begin{equation*}\|e^{t\triangle}u_0\|_E=\|e^{t\triangle}u_0\|_{a;s,2,q}<M/2,
\end{equation*}
provided $T$ is sufficiently small.  We note that the desired $T$ depends only on $M$ and $\|u_0\|_{B^{n/2}_{2,q}}$. So we have that $\Phi:E\rightarrow E$ provided $M$ and $T$ are sufficiently small, and $T$ can be taken taken as a function of $\|u_0\|_{B^{n/2}_{2,q}}$.  The proof that $\Phi$ is a contraction follows from the standard contraction mapping argument and will be omitted.

\section{Extension to Global existence}\label{extension to global existence}
In this section, we prove the following Theorem.
\begin{theorem}\label{fozzy1}The solution $u$ to the mLANS equation given by Theorem $\ref{new local version 34}$ with initial data $u_0\in B^{3/2}_{2,q}(\mathbb{R}^3)$ can be extended to a global solution.
\end{theorem}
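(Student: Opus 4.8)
The plan is to promote the local solution $u$ from Theorem~\ref{new local version 34} to a global one by the standard continuation-via-a priori-estimate scheme: since the local existence time $T$ in Theorem~\ref{new local version 34} depends only on $\|u_0\|_{B^{3/2}_{2,q}}$ (and on the fixed data $v$), it suffices to show that $\sup_{0\le t<T_{\max}}\|u(t)\|_{B^{3/2}_{2,q}}$ stays finite on any finite time interval, for then the local existence time never shrinks to zero and the solution extends forever. So the entire task is an a priori bound on $\|u(t)\|_{B^{3/2}_{2,q}}$, uniform on compact time intervals.

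The first step is to upgrade regularity: using Corollary~\ref{cor34} (equivalently Lemma~\ref{higher reg}), the local solution actually lies in $\dot C^T_{a;s,2,q}$ for every $s>3/2$, so on any subinterval $[t_0,t_0+T]$ with $t_0>0$ the solution is as smooth as we like, and the only issue is propagating a bound on the low norm $B^{3/2}_{2,q}=H^{3/2,2}$. Here $n=3$, so $B^{3/2}_{2,q}$ sits at the scaling-critical regularity for Navier--Stokes in three dimensions; the point of the LANS modification is that the extra term $\div(1-\alpha^2\triangle)^{-1}(\nabla u\nabla u)$ is of lower order and should be controllable. The second step is to derive an energy-type inequality for $\|u(t)\|^2_{H^{3/2,2}}$ (or $\|\Delta_j u\|_{L^2}$ frequency by frequency, then summed with weights $2^{3j/2}$): differentiate in time, use the equation, and estimate each nonlinear contribution. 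The terms $\div(u\otimes u)$, $\div(u\otimes v)+\div(v\otimes u)$ are handled by Proposition~\ref{new product estimate} together with the $H^{3/2}$-structure (and for the $v$-terms, the fact from \eqref{google} that $\|v\|_{0;3/p,p,q}$ can be taken arbitrarily small, which lets one absorb these linear-in-$u$ terms into the dissipation). The $\tau^\alpha$-terms, being smoothed by $(1-\alpha^2\triangle)^{-1}$, lose two derivatives relative to $\nabla u\nabla u$ and hence are genuinely subcritical; a product estimate plus Bernstein \eqref{Bernstein} should bound them by a fixed power of $\|u\|_{H^{3/2,2}}$ times a lower-regularity norm, absorbable by interpolation and the viscous term. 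Assembling these yields a differential inequality of the form $\frac{d}{dt}\|u\|^2_{H^{3/2,2}}+\nu\|u\|^2_{H^{3/2+1,2}}\le C(\|v\|)\|u\|^2_{H^{3/2,2}}+ (\text{harmless lower-order}),$ from which Gronwall gives the desired bound on any finite interval — provided one first knows a control at the critical level that prevents the constant from depending on the solution itself.

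The main obstacle — and the step that will require the real work deferred to Sections~\ref{extension to global existence}--\ref{a priori sobolev} — is closing the estimate at the critical regularity $H^{3/2,2}(\mathbb{R}^3)$: at criticality one cannot simply Gronwall, because the nonlinear term $\div(u\otimes u)$ contributes a term that is quadratic in the critical norm and not obviously dominated by the dissipation unless one already has smallness or some conserved/decaying quantity. This is exactly the place where the LANS structure must be exploited more carefully than for Navier--Stokes: the presence of $\tau^\alpha$ supplies an additional coercive quantity (morally an $H^1$-type energy for $(1-\alpha^2\triangle)^{1/2}u$, as in \cite{MS} and \cite{sobpaper}) that is globally controlled, and the strategy will be to interpolate the critical $B^{3/2}_{2,q}$ norm between that controlled low-regularity energy and a high-regularity norm whose growth is only polynomial in time. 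Concretely, I expect the argument to: (i) establish a global-in-time bound on a low-regularity energy of $u$ using the LANS dissipation (this is where $\alpha>0$ is essential and where the $\div(u\otimes v)$ terms are tamed by the smallness of $v$); (ii) deduce from it, via the product and heat-kernel estimates, a bound on $\|u(t)\|_{B^{3/2}_{2,q}}$ on $[0,t_0]$ for a short $t_0$, and then a propagation lemma showing the $B^{3/2}_{2,q}$ norm cannot blow up in finite time; (iii) conclude by the continuation criterion. The delicate bookkeeping — choosing the interpolation exponents, handling the $q$-summation in the Besov norm, and keeping every constant independent of the solution — is what occupies the remaining sections; the skeleton above is the whole of the argument for Theorem~\ref{fozzy1}.
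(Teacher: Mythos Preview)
Your overall architecture---continuation criterion plus an a priori bound, with the LANS-specific $H^1$-type energy (for $(1-\alpha^2\triangle)^{1/2}u$) as the key global input---matches the paper exactly, and you are right that a direct energy estimate at the critical level $H^{3/2,2}$ does not close. Where you diverge is in step~(ii): you propose to recover the $B^{3/2}_{2,q}$ bound by interpolating between the controlled $H^1$ energy and some high norm ``whose growth is only polynomial in time,'' but you do not say how to obtain that polynomial control, and without it the scheme is circular. The paper takes a cleaner route: after proving the $H^{1,2}$ bound (Lemma~\ref{mlanssobglobal1}), it runs a \emph{second} energy estimate at the supercritical level $\dot H^{2,2}$ by applying $A=-\triangle$ to the equation and pairing with $Au$. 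The nonlinear terms are then bounded, via the Ladyzhenskaya interpolation $\|u\|_{\dot H^{r_1}}\le C\|u\|_{L^2}^{1-r_1/r_2}\|u\|_{\dot H^{r_2}}^{r_1/r_2}$ and the already-known $H^{1,2}$ bound, by $C(M)\|u\|_{\dot H^{3,2}}^{15/8}$; Young's inequality absorbs this into the dissipative term $\|u\|_{\dot H^{3,2}}^2$, yielding $\partial_t\|u\|_{\dot H^{2,2}}^2\le C(N,M)$ and hence a uniform $\dot H^{2,2}$ bound on $[a,T^*)$. The $B^{3/2}_{2,q}$ control then drops out of the embedding $\|u\|_{B^{3/2}_{2,q}}\le\|u\|_{H^{2,2}}$, with no interpolation against an uncontrolled high norm needed. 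So the missing concrete idea in your sketch is precisely this supercritical $\dot H^{2,2}$ energy inequality (Lemma~\ref{mlansglobalthm2}); once you have it, your steps (i) and (iii) finish the proof exactly as you say.
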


The proof follows from a bootstrapping argument and $\emph{a~priori}$ estimates proven in the next section.  We begin here by setting up the bootstrap, and start by assuming the unique local solution $u$ with $u(0)=u_0\in B^{3/2}_{2,q}(\mathbb{R}^3)$ given by Theorem $\ref{new local version 34}$ satisfies $u\in BC([0,T_0):B^{3/2}_{2,q}(\mathbb{R}^3))$ for some $T_0<\infty$.  By definition (equation $(\ref{defn of bc})$), this means 
\begin{equation*}\sup_{0\leq t<T_0}\|u(t)\|_{B^{3/2}_{2,q}}=M<\infty.
\end{equation*}
For any $t\in [0,T_0)$, define $v_t(0)=u(t)$.  Then by Theorem $\ref{new local version 34}$ there is a unique solution to the mLANS equation $v_t\in BC([0,T(\|v_t(0)\|_{B^{3/2}_{2,q}})):B^{3/2}_{2,q}(\mathbb{R}^3))$ with initial data $v_t(0)=u(t)$, where $T(\|v_t(0)\|_{B^{3/2}_{2,q}})$ indicates that the time interval of the solution depends only on  $\|v_t(0)\|_{B^{3/2}_{2,q}}$.   The key fact here is that, since $\|u(t)\|_{B^{3/2}_{2,q}}=\|v_t(0)\|_{B^{3/2}_{2,q}} \leq M$ for any $t$, there exists a $\tilde{T}$ such that $T(\|v_t(0)\|_{B^{3/2}_{2,q}})\geq \tilde{T}$, and thus, for any $t$, $v_t$ exists on a time interval of at least length $\tilde{T}$.  

By uniqueness, we also have that $v_t(s)=u(t+s)$, for $s\in [0,\tilde{T})$, which means $u$ exists on the interval $[t,t+\tilde{T})$ for any $t\in [0,T)$.  By choosing $t^*=T_0-\tilde{T}/2$, the original solution $u$ is extended to $u\in BC([0,T_1):B^{n/2}_{2,q}(\mathbb{R}^n))$, where $T_1=T_0+\tilde{T}/2$.  This completes the bootstrap.  

By this bootstrapping argument, given that 
\begin{equation}\label{fozzy2}\sup_{0\leq t<T}\|u(t)\|_{B^{3/2}_{2,q}}=M<\infty,
\end{equation}
the local solution $u$ can be extended to a time interval $[0,T_1)$, where $T<T_1$.   

To prove Theorem $\ref{fozzy1}$, we will assume for contradiction that our solution $u$ is not a global solution.  This means there exists a $T^*<\infty$ such that $u\in BC([0,T):B^{3/2}_{2,q}(\mathbb{R}^3))$ for any $T<T^*$, but $u\notin BC([0,T^*):B^{3/2}_{2,q}(\mathbb{R}^3))$.
  
To contradict this assumption, we use the following $\emph{a~priori}$ results.  We first recall, by Corollary $\ref{old local extension}$, that $u(t)\in B^{r}_{2,q}(\mathbb{R}^3)$ for any real $r$.  Then by Besov embedding (equation $(\ref{besov embedding}$)), we have 
\begin{equation}\aligned \label{mkg1}\|u(t)\|_{B^{3/2}_{2,q}}&\leq \|u(t)\|_{B^{2}_{2,2}}=\|u(t)\|_{H^{2,2}},
\\ \|u(t)\|_{H^{3,2}}&\leq \|u(t)\|_{B^{3+\varepsilon}_{2,q}},
\endaligned
\end{equation}
for any $q\in [1,\infty)$.  This means $u$ satisfies the hypothesis of Theorem $\ref{mlansglobalthm3}$ (proven in Section $\ref{a priori sobolev}$ below), so by Theorem $\ref{mlansglobalthm3}$, for any $a\in (0,T^*)$, 
\begin{equation}\label{mkg2}\sup_{a\leq t<T^*}\|u(t)\|_{B^{3/2}_{2,q}}=K<\infty.
\end{equation} 

By assumption, since $a<T$, we have that $u\in BC([0,a):B^{3/2}_{2,q}(\mathbb{R}^3))$, so we have finally proven that $u\in BC([0,T^*):B^{3/2}_{2,q}(\mathbb{R}^3))$, which provides the desired contradiction, and finishes the proof of Theorem $\ref{fozzy1}$, up to the proof of Theorem $\ref{mlansglobalthm3}$, which is the main result of the next section.

\section{Sobolev space $\emph{a~priori}$ estimates}\label{a priori sobolev}
As mentioned in the introduction, it is easier to control the long time behavior of solutions to the LANS equation than solutions to the Navier-Stokes equation.  More specifically, cancellation in the non-linear terms leads to uniform-in-time bounds on the Sobolev space norms of the solution, which, combined with standard bootstrapping arguments, can extend local solutions to global solutions.  The first goal of this section is to prove Theorem $\ref{mlansglobalthm3}$, an analogous $\emph{a~priori}$ bound for the mLANS equation, which completes the proof of Theorem $\ref{fozzy1}$.  At the end of this section, we address the abstract interpolation result referenced at the end of the introduction.  These results complete the proof of Theorem $\ref{mlansglobalthm2}$.  

Before stating the $\emph{a~priori}$ results, we recall some notation from the previous section and some properties of our small-data global solution $v$ that will be used throughout this section.  For notational convenience, we set $(-\triangle)=A$.  We let $T^*$ be as in the previous section, and we let $a\in (0,T^*)$.  From Theorem $\ref{old local theorem}$, Corollary $\ref{old local extension}$, and the Besov space embedding results (equation $(\ref{besov embedding}))$, we have that
\begin{equation}\label{uniform v bound}\sup_{a\leq t<T^*}\|v(t)\|_{H^{r,p}}\leq N<\infty,
\end{equation}
and that, for any $\varepsilon>0$,  
\begin{equation}\label{uniform initial reg bound}\sup_{0\leq t<T^*}\|v(t)\|_{B^{n/p}_{p,q}}<\varepsilon,
\end{equation}
provided $\|v_0\|_{B^{n/p}_{p,q}}$ is small enough.

The first $\emph{a~priori}$ result provides a bound for the $H^{1,2}(\mathbb{R}^n)$ norm of a solution $u$ to the mLANS equation.

\begin{lemma}\label{mlanssobglobal1}Let $u$ be a solution to the mLANS equation, with $v$ as described above.  Then 
\begin{equation*}\aligned \sup_{a\leq t<T^*}\|u(t)\|_{L^2}^2+\alpha^2\|u(t)\|_{\dot{H}^{1,2}}^2\leq C(N)(\|u(a)\|_{L^2}^2+\alpha^2\|u(a)\|_{\dot{H}^{1,2}}^2),
\endaligned
\end{equation*}
where $\|\cdot\|_{\dot{H}^{r,p}}$ denotes the homogeneous Sobolev space norm.
\end{lemma}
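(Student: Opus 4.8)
The plan is to run a standard energy estimate for the mLANS equation, exploiting the cancellation that makes the Lagrangian-averaged model easier to bound than Navier--Stokes. First I would take the $L^2$ inner product of the mLANS equation $\partial_t u - \triangle u + \div(u\otimes u + u\otimes v + v\otimes u) + \div(1-\alpha^2\triangle)^{-1}(\nabla u\,\nabla u + \nabla u\,\nabla v + \cdots) = 0$ with $(1-\alpha^2\triangle)u$, the natural energy multiplier for the $\alpha$-model. This produces, on the left, $\tfrac12\tfrac{d}{dt}(\|u\|_{L^2}^2 + \alpha^2\|u\|_{\dot H^{1,2}}^2)$ from the time derivative and, from $-\triangle u$ paired against $(1-\alpha^2\triangle)u$, a good dissipative term $\|u\|_{\dot H^{1,2}}^2 + \alpha^2\|u\|_{\dot H^{2,2}}^2$. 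The purely quadratic-in-$u$ terms $\div(u\otimes u)$ and $\div(1-\alpha^2\triangle)^{-1}(\nabla u\,\nabla u)$, tested against $(1-\alpha^2\triangle)u$, must be shown to cancel (or combine into a sign-definite/controllable remainder) — this is exactly the algebraic identity underlying the global well-posedness of the LANS equation in the original Marsden--Shkoller work, adapted here with $u$ in place of $w$; after the cancellation no contribution from these terms survives. So the differential inequality reduces to controlling only the terms that are linear in $v$.

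The key step is then to absorb the $v$-dependent terms, namely $\div(u\otimes v + v\otimes u)$ and $\div(1-\alpha^2\triangle)^{-1}(\nabla u\,\nabla v + \cdots)$, into the dissipation using the uniform bound $\sup_{a\le t<T^*}\|v(t)\|_{H^{r,p}} \le N$ from $(\ref{uniform v bound})$. Pairing these against $(1-\alpha^2\triangle)u$, I expect terms roughly of the form $\int (\nabla v)\, u\, (1-\alpha^2\triangle)u\,dx$ and similar; by Hölder with exponents adapted to $p$ and $\tilde p$, the smoothing in $(1-\alpha^2\triangle)^{-1}$, and the Bernstein/Sobolev embeddings from $(\ref{besov embedding})$, each such term is bounded by $C(N)\big(\|u\|_{L^2}^2 + \alpha^2\|u\|_{\dot H^{1,2}}^2\big)$ plus a small multiple of the dissipative quantity $\|u\|_{\dot H^{1,2}}^2 + \alpha^2\|u\|_{\dot H^{2,2}}^2$, the latter being absorbed on the left. (The extra unlisted terms in $V$ and $W$ built from $\nabla u$, $(\nabla u)^T$, $\nabla v$, $(\nabla v)^T$ are handled identically since they have the same scaling.) This yields
\begin{equation*}
\frac{d}{dt}\big(\|u(t)\|_{L^2}^2 + \alpha^2\|u(t)\|_{\dot H^{1,2}}^2\big) \le C(N)\big(\|u(t)\|_{L^2}^2 + \alpha^2\|u(t)\|_{\dot H^{1,2}}^2\big)
\end{equation*}
on $[a,T^*)$, and Grönwall's inequality — with $C(N)$ independent of $t$ and the interval length $T^*-a$ finite — gives the stated bound, with the constant $C(N)$ in the conclusion being $e^{C(N)(T^*-a)}$.

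The main obstacle I anticipate is making the cancellation in the quadratic terms rigorous at the level of the energy identity: one must verify that, after integration by parts, $\int \div(u\otimes u)\cdot(1-\alpha^2\triangle)u\,dx + \int \div(1-\alpha^2\triangle)^{-1}(\nabla u\,\nabla u)\cdot(1-\alpha^2\triangle)u\,dx$ really does vanish (using $\div u = 0$ and the precise definition of $\tau^\alpha$ in $(\ref{tau})$, including the symmetrized $Def$/$Rot$ structure), rather than merely being "controllable." A secondary technical point is justifying that $u$ is regular enough for these manipulations — but this is supplied by Corollary $\ref{cor34}$, which removes the restriction $s - n/2 < 1$ and hence puts $u(t)$ in $B^r_{2,q}$ for all $r$, so $u(t)\in H^{2,2}$ and the pairing with $(1-\alpha^2\triangle)u$ is legitimate. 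Once the cancellation is in hand, the remaining estimates are routine Hölder-plus-Sobolev bookkeeping of the kind carried out in Section $\ref{Local Solutions}$.
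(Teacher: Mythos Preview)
Your proposal is correct and follows essentially the same route as the paper: the paper tests the equivalent form $(\ref{LANSglobalpi})$ against $u$ (which is the same as testing the original mLANS against $(1-\alpha^2\triangle)u$), uses exactly the LANS cancellation you describe to eliminate the purely quadratic-in-$u$ terms ($I_1=I_2=I_3=0$), bounds the $v$-linear terms $I_4,I_5,I_6$ by H\"older and Sobolev, and closes with Gronwall. The one tactical difference is in how the dangerous piece of $I_5$ carrying $\alpha^2\|u\|_{\dot H^{2,2}}^2$ is absorbed: rather than a Young-inequality splitting as you propose, the paper invokes the smallness of $\|v(t)\|_{L^p}$ from $(\ref{uniform initial reg bound})$ to make the combined coefficient $C\|v\|_{L^p}-1$ negative, so that this term merges directly with the dissipation; your Young's-inequality approach would also work here and has the mild advantage of not needing the smallness of $v$ at this particular step.
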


Note that, if $\alpha=0$, this only provides an $L^2$ bound, which is not sufficient to extend teh local solutions to global solutions.  The second lemma provides a bound for the $\dot{H}^{2,2}(\mathbb{R}^3)$ norm.  
\begin{lemma}\label{mlansglobalthm2}Let $u$ be a solution to the mLANS equation, with $v$ as specified in the beginning of the section.  We assume $u(t)\in \dot{H}^{3,2}(\mathbb{R}^3)$ for any $t$ in $[a,T^*)$.  Then  
\begin{equation}\label{noire1}\sup_{t\in [a,T^*)}\|u(t)\|_{\dot{H}^{2,2}}= K<\infty
\end{equation}
for some real number $K$.  
\end{lemma}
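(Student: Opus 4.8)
The plan is to derive a differential inequality for $\|u(t)\|_{\dot H^{2,2}}^2$ (or rather for the equivalent quantity $\|u\|_{L^2}^2 + \alpha^2 \|u\|_{\dot H^{1,2}}^2$ differentiated once more, i.e. $\|Au\|_{L^2}^2$-type energies) and then use Lemma \ref{mlanssobglobal1} together with a Gronwall-type argument to close it. Concretely, I would apply $A = -\triangle$ to the mLANS equation, pair against $Au$ (equivalently, test the equation with $A^2 u$ in $L^2$), and integrate by parts. The parabolic term produces $-\|A^{3/2}u\|_{L^2}^2 = -\|u\|_{\dot H^{3,2}}^2$ as a good sink term, and the whole strategy rests on absorbing every nonlinear contribution either into this dissipation or into a term of the form $C(N)\,g(t)\,\|u\|_{\dot H^{2,2}}^2$ with $\int_a^{T^*} g(t)\,dt < \infty$, whereupon Gronwall gives the uniform bound $K$. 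The hypothesis $u(t)\in\dot H^{3,2}$ for all $t\in[a,T^*)$ is exactly what makes this formal computation rigorous, since it justifies the integrations by parts and guarantees the dissipation term is finite.

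The key structural point — and the reason the LANS equation is more tractable than Navier–Stokes here — is the cancellation in the genuinely cubic terms. For the $\div(u\otimes u)$ and $\div\tau^\alpha(u,u)$ contributions I expect the Lagrangian-averaged structure to give, after integration by parts, a bound of the form $C\|u\|_{L^\infty}\|u\|_{\dot H^{2,2}}^2$ or $C\|\nabla u\|_{L^\infty}\|u\|_{\dot H^{1,2}}\|u\|_{\dot H^{2,2}}$, with the worst terms either cancelling identically (as in the classical LANS energy estimates of \cite{MS}, \cite{sobpaper}) or being controlled via the smoothing in $(1-\alpha^2\triangle)^{-1}$, which trades two derivatives. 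Using $\|u\|_{L^\infty}\lesssim \|u\|_{\dot H^{1,2}}^{1/2}\|u\|_{\dot H^{2,2}}^{1/2}$ (3D Gagliardo–Nirenberg/Agmon) and Lemma \ref{mlanssobglobal1} to bound $\|u(t)\|_{\dot H^{1,2}}$ uniformly by $C(N)$, these become $C(N)\|u\|_{\dot H^{2,2}}^{3/2}\|u\|_{\dot H^{3,2}}^{1/2}$-type quantities, which Young's inequality splits as $\tfrac14\|u\|_{\dot H^{3,2}}^2 + C(N)\|u\|_{\dot H^{2,2}}^2$: the first piece is absorbed by dissipation, the second fed to Gronwall. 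For the mixed $u$–$v$ terms $\div(u\otimes v + v\otimes u)$ and $2\div\tau^\alpha(u,v)$, the plan is to place all $v$-derivatives into an $L^p$ (or $H^{r,p}$) norm controlled by $N$ via \eqref{uniform v bound}, and the remaining $u$-factor into a Sobolev norm interpolated between $\dot H^{2,2}$ and $\dot H^{3,2}$; Hölder with the exponent pair $(p, p', \dots)$ arising from $1/2 = 1/p + \dots$ and then Young again yields the same template. Some care is needed because \eqref{uniform v bound} only holds on $[a,T^*)$, not at $t=0$, but that is precisely why the statement is phrased on $[a,T^*)$.

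After summing all contributions, I expect an inequality of the shape
\begin{equation*}
\frac{d}{dt}\Big(\|u\|_{\dot H^{2,2}}^2\Big) + \frac12\|u\|_{\dot H^{3,2}}^2 \;\le\; C(N)\,\|u\|_{\dot H^{2,2}}^2
\end{equation*}
on $[a,T^*)$, possibly with a harmless extra $C(N)$ additive constant and possibly after first establishing the analogous statement for the $\alpha$-modified energy $\|u\|_{\dot H^{1,2}}^2 + \alpha^2\|u\|_{\dot H^{2,2}}^2$ and then invoking Lemma \ref{mlanssobglobal1} to pass to $\dot H^{2,2}$ alone. Gronwall's inequality on the finite interval $[a,T^*)$ then gives $\|u(t)\|_{\dot H^{2,2}}^2 \le e^{C(N)(T^*-a)}\|u(a)\|_{\dot H^{2,2}}^2 =: K < \infty$, which is \eqref{noire1}.

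The main obstacle I anticipate is the bookkeeping of the nonlinear terms coming from $\tau^\alpha$: the Reynolds stress expands into several products of $\nabla u$, $(\nabla u)^T$, $\nabla v$, $(\nabla v)^T$ (as the author notes after \eqref{new pde}), and one must verify that in each of them either the top-order $u$-derivative is killed by an integration-by-parts cancellation (using $\div u = 0$ and the symmetry/antisymmetry of $Def$ and $Rot$) or the operator $(1-\alpha^2\triangle)^{-1}$ supplies enough smoothing that Hölder–Young closes. Getting the correct powers of $\alpha$ and tracking the $C(N)$ dependence through the mixed terms — where \eqref{uniform v bound} must be invoked at the right Lebesgue exponent dictated by $p \gg n$ — is the delicate part; everything else is the standard energy-method template.
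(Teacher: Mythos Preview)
Your framework is right—apply $A$, pair with $Au$, use the dissipation $\|u\|_{\dot H^{3,2}}^2$—but the handling of the cubic $u$--$u$ terms does not close as you claim. After integration by parts (with or without the commutator cancellation $(u\!\cdot\!\nabla Au,Au)=0$), the transport contribution is controlled by $\|\nabla u\|_{L^\infty}\|u\|_{\dot H^{2,2}}^2$, not $\|u\|_{L^\infty}\|u\|_{\dot H^{2,2}}^2$. In three dimensions $\|\nabla u\|_{L^\infty}$ requires at least $\dot H^{5/2+}$, so any interpolation involving only $\dot H^{1}$ and $\dot H^{2}$ fails; interpolating with $\dot H^{3}$ and then applying Young always leaves a power of $\|u\|_{\dot H^{2,2}}$ strictly greater than $2$ on the right (e.g.\ $\|u\|_{\dot H^{2,2}}^{10/3}$), and Gronwall cannot absorb that. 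The specific claim that these terms become ``$C(N)\|u\|_{\dot H^{2,2}}^{3/2}\|u\|_{\dot H^{3,2}}^{1/2}$-type'' from the stated Agmon inequality is an arithmetic slip: plugging $\|u\|_{L^\infty}\lesssim \|u\|_{\dot H^{1}}^{1/2}\|u\|_{\dot H^{2}}^{1/2}$ into $\|u\|_{L^\infty}\|u\|_{\dot H^{2}}^2$ produces $C(M)\|u\|_{\dot H^{2}}^{5/2}$, with no $\dot H^{3}$ factor at all. Also, no special LANS cancellation is available at this level; the paper uses cancellation only for Lemma~\ref{mlanssobglobal1}.

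The paper's remedy is to abandon the Gronwall template entirely and instead interpolate \emph{every} occurrence of $\|u\|_{\dot H^{2,2}}$ between the already-bounded $\dot H^{1,2}$ norm (Lemma~\ref{mlanssobglobal1}) and $\dot H^{3,2}$ via the Ladyzhenskaya inequality $\|u\|_{\dot H^{2,2}}\le C\|u\|_{\dot H^{1,2}}^{1/2}\|u\|_{\dot H^{3,2}}^{1/2}$. This collapses both $I$ and $J$ to $C(M,N)\|u\|_{\dot H^{3,2}}^{\gamma}$ with $\gamma<2$ (concretely $\gamma=15/8$ for $I$ and $\gamma=3/2$ for $J$), so Young gives $\tfrac12\|u\|_{\dot H^{3,2}}^2 + C(M,N)$ with a \emph{constant} remainder. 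The resulting differential inequality is simply $\partial_t\|u\|_{\dot H^{2,2}}^2\le C(M,N)$, which one integrates over $[a,T^*)$ directly—no Gronwall needed.
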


The combination of these two Lemma's and the Besov embeddings in equation $(\ref{besov embedding})$ proves the following Theorem.
\begin{theorem}\label{mlansglobalthm3}Let $u$ be a solution to the mLANS equation, with $v$ as specified in the beginning of the section.  We assume $u(t)\in H^{3,2}(\mathbb{R}^3)$ for any $t$ in $[a,T^*)$.  Then  
\begin{equation}\label{noire4}\sup_{t\in [a,T^*)}\|u(t)\|_{B^{3/2}_{2,q}}= K<\infty
\end{equation}
for some real number $K$.
\end{theorem}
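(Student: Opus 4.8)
The plan is to deduce Theorem \ref{mlansglobalthm3} from Lemma \ref{mlanssobglobal1} and Lemma \ref{mlansglobalthm2} together with the Besov embedding chain in equation (\ref{besov embedding}). The target quantity is $\sup_{a\le t<T^*}\|u(t)\|_{B^{3/2}_{2,q}}$. The fourth embedding in (\ref{besov embedding}) says $\|f\|_{H^{s,2}}=\|f\|_{B^s_{2,2}}\le C\|f\|_{B^r_{2,q}}$ for $r>s$, but here I want the \emph{reverse}: I want to bound a Besov norm \emph{below} a (larger) Sobolev regularity. Since $B^{3/2}_{2,q}$ with $q\ge 2$ is no worse than $B^{3/2}_{2,2}=H^{3/2,2}$ (the first embedding in (\ref{besov embedding}), monotonicity in the summability index when $q_2\ge q_1$ applies with $q_2=q$, $q_1=2$ only for $q\ge 2$; for $q<2$ one instead uses $H^{2,2}=B^2_{2,2}\hookrightarrow B^{3/2}_{2,q}$ via the combination of the two Besov embeddings, raising the smoothness index from $3/2$ to $2$ to absorb the drop in the summability index), it suffices to control $\|u(t)\|_{H^{3/2,2}}$, or even just $\|u(t)\|_{H^{2,2}}$, uniformly on $[a,T^*)$. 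So the first step is to reduce (\ref{noire4}) to a uniform-in-time bound on $\|u(t)\|_{H^{2,2}}$.

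Next I would interpolate: $\|u(t)\|_{H^{2,2}}^2 \le C\big(\|u(t)\|_{L^2}^2 + \|u(t)\|_{\dot H^{2,2}}^2\big)$, and more than that is not needed. Lemma \ref{mlanssobglobal1} gives, for $\alpha>0$,
\begin{equation*}
\sup_{a\le t<T^*}\big(\|u(t)\|_{L^2}^2+\alpha^2\|u(t)\|_{\dot H^{1,2}}^2\big)\le C(N)\big(\|u(a)\|_{L^2}^2+\alpha^2\|u(a)\|_{\dot H^{1,2}}^2\big),
\end{equation*}
which in particular bounds $\sup_{a\le t<T^*}\|u(t)\|_{L^2}$ by a finite constant depending on $N$ and on the (finite) data $\|u(a)\|_{H^{1,2}}$; note $u(a)\in H^{3,2}\subset H^{1,2}$ by hypothesis. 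Lemma \ref{mlansglobalthm2}, whose hypothesis $u(t)\in\dot H^{3,2}$ for $t\in[a,T^*)$ is exactly what Theorem \ref{mlansglobalthm3} assumes (via $H^{3,2}\subset\dot H^{3,2}$), gives $\sup_{a\le t<T^*}\|u(t)\|_{\dot H^{2,2}}=K<\infty$. Adding the two bounds yields $\sup_{a\le t<T^*}\|u(t)\|_{H^{2,2}}<\infty$, hence, through the embedding $H^{2,2}=B^2_{2,2}\hookrightarrow B^{3/2}_{2,q}$ (for every $q\in[1,\infty)$, since lowering the smoothness from $2$ to $3/2$ more than compensates any increase in $q$), the desired conclusion (\ref{noire4}).

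Assembling this into a proof: first invoke Lemma \ref{mlanssobglobal1} to get the uniform $L^2$ bound; then invoke Lemma \ref{mlansglobalthm2}, whose hypothesis is met, to get the uniform $\dot H^{2,2}$ bound; combine to get a uniform $H^{2,2}$ bound; finally apply Besov embedding to pass from $H^{2,2}$ to $B^{3/2}_{2,q}$. The only subtlety — and the one step that requires a moment's care rather than being purely mechanical — is the embedding bookkeeping: $B^{3/2}_{2,q}$ for general $q$ is not literally controlled by $H^{3/2,2}=B^{3/2}_{2,2}$ when $q<2$, so one must route through $B^2_{2,2}=H^{2,2}$, using that the extra half-derivative of smoothness dominates the summability-index loss; everything else is an immediate combination of the two preceding lemmas. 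There is no genuine obstacle here, as the substance of the argument has been exported to the two lemmas proved in this section.
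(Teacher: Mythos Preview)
Your proposal is correct and matches the paper's own argument essentially verbatim: the paper states that Theorem~\ref{mlansglobalthm3} follows by combining Lemma~\ref{mlanssobglobal1} and Lemma~\ref{mlansglobalthm2} with the Besov embeddings in~(\ref{besov embedding}), and in~(\ref{mkg1}) it records precisely the embedding $\|u(t)\|_{B^{3/2}_{2,q}}\le\|u(t)\|_{B^2_{2,2}}=\|u(t)\|_{H^{2,2}}$ that you route through. Your additional care with the $q<2$ case (using the half-derivative of extra smoothness to absorb the summability-index loss) is a welcome clarification the paper leaves implicit.
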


\subsection{Proof of Lemma $\ref{mlanssobglobal1}$} 
We begin the proof of the Lemma by stating the following equivalent form of the mLANS equation (see Section $3$ of \cite{MS}):
\begin{equation}\label{LANSglobalpi}\aligned &\partial_t (1+A\alpha^2)u(t) +(1+A\alpha^2)A u(t)
=-\nabla p -\alpha^2(\nabla {u(t)})^T\cdot A{u(t)}
\\  &-\nabla_{u(t)}[(1+A\alpha^2){u(t)}]-(1+A\alpha^2)(\div({u(t)}\tensor {v(t)}))- \div(\nabla {u(t)}\nabla {v(t)})
\endaligned
\end{equation}
Taking the $L^2$ product of the equation with ${u(t)}$, we get 
\begin{equation*}\aligned \label{monticeto}&\partial_t (\|u(t)\|_{L^2}^2+\alpha^2\|A^{1/2}u(t)\|_{L^2}^2) +\|A^{1/2}u(t)\|_{L^2}^2+\alpha^2\|Au(t)\|_{L^2}^2
\\ \leq &I_1+I_2+I_3+I_4+I_5+I_6,
\endaligned
\end{equation*}
where
\begin{equation*}\aligned I_1&=(\nabla_{u(t)} {u(t)},{u(t)}),
\\ I_2&=\alpha^2\((\nabla_{u(t)} \triangle {u(t)},{u(t)})+((\nabla {u(t)})^T\cdot A{u(t)}, {u(t)})\),
\\ I_3&=(\nabla p,{u(t)}),
\\ I_4&=(\div({u(t)}\tensor {v(t)}),{u(t)}),
\\ I_5&=\alpha^2(A\div({u(t)}\tensor {v(t)}),{u(t)}),
\\ I_6&=(\div(\nabla {u(t)}\nabla {v(t)}),{u(t)}).
\endaligned
\end{equation*}

An application of integration by parts and recalling that $\div {u(t)}=0$ gives that $I_1=I_3=0$.  For $I_2$, writing it in coordinates (and temporarily suppressing the time dependence), we see that 
\begin{equation*}\aligned I_2=&\sum_{i,j=1}^3\alpha^2\int u_i(\partial_{x_i} \triangle u_j)u_j+(\triangle u_i)(\partial_{x_j} u_i)u_j
 \\ =&\sum_{i,j=1}^3\alpha^2\int -(u_i(\triangle u_j)(\partial_{x_i} u_j))+(\triangle u_i)(\partial_{x_j} u_i)u_j
= 0,
\endaligned
\end{equation*}
where we again used integration by parts and exploited the divergence free condition.  We remark here that it is these cancellations which make it easier to control the long time behavior of the LANS equations.  For $I_4$, using Holder's inequality and the Sobolev embedding theorem (and recalling that $\|\cdot\|_{\dot{H}^{s,p}}$ denotes the homogeneous Sobolev space norm), we have 
\begin{equation}\label{rubio1}\aligned I_4&\leq \|\nabla {u(t)}\|_{L^2}\|{u(t)}\tensor {v(t)}\|_{L^2}\leq \|\nabla {u(t)}\|_{L^2}\|{u(t)}\|_{L^{\tilde{p}}}\|{v(t)}\|_{L^p}
\\ &\leq \|\nabla {u(t)}\|_{L^2}\|{u(t)}\|_{\dot{H}^{s,2}}\|{v(t)}\|_{L^p},
\endaligned
\end{equation}
where Holder's inequality requires $1/2=1/\tilde{p}+1/p$ and the Sobolev embedding theorem requires $1/\tilde{p}=1/2-s/3$, with $2s<3$.  Solving the system for $s$, we get that $s=3/p$, and for $p>3$, we have that $s<1$, so we finally bound $I_4$ by 
\begin{equation*}I_4\leq C\alpha^{-2}(\|u(t)\|^2_{L^2}+\alpha^2\|\nabla {u(t)}\|_{L^2}^2)\|{v(t)}\|_{L^p}.
\end{equation*}

To bound $I_5$, we use integration by parts, the Leibnitz rule, and then Holder's inequality and Sobolev embeddings as in the estimate of $I_4$ to get 
\begin{equation*}\aligned I_5&\leq \alpha^2((A{u(t)}){v(t)},\nabla {u(t)})+(\nabla {u(t)} \nabla {v(t)},\nabla {u(t)})+({u(t)}(\triangle {v(t)}),\nabla {u(t)})
\\ &\leq \alpha^2(\|{u(t)}\|_{\dot{H}^{2,2}}\|{v(t)}\nabla {u(t)}\|_{L^2}+\|\nabla {u(t)}\|_{L^2}\|\nabla {u(t)} \nabla {v(t)}\|_{L^2}+ \|\nabla {u(t)}\|_{L^2}\|{u(t)}Av(t)\|_{L^2})
\\ &\leq C\alpha^2(\|{u(t)}\|_{\dot{H}^{2,2}}\|{v(t)}\|_{L^p}+\|\nabla {u(t)}\|_{L^2}^2\|\nabla {v(t)}\|_{L^\infty}+\|\nabla {u(t)}\|_{L^2}^2\|A {v(t)}\|_{L^p})
\\ &\leq C\alpha^2(\|{u(t)}\|_{\dot{H}^{2,2}}\|{v(t)}\|_{L^p})+(\|{u(t)}\|_{L^2}^2+\alpha^2\|\nabla {u(t)}\|_{L^2}^2)\|{v(t)}\|_{\dot{H^{2,p}}}.
\endaligned
\end{equation*}  

For $I_6$, the same type of argument gives 
\begin{equation*}\aligned I_6&\leq ((\nabla {u(t)}\nabla {v(t)}),\nabla {u(t)})\leq \|\nabla u(t)\|_{L^2}^2\|v(t)\|_{\dot{H}^{1+n/p+\varepsilon,p}}
\\ &\leq C\alpha^{-2}(\|{u(t)}\|_{L^2}^2+\alpha^2\|\nabla {u(t)}\|_{L^2}^2)\|{v(t)}\|_{\dot{H^{2,p}}}.
\endaligned
\end{equation*}

Plugging the estimates for $I_1$ through $I_6$ back into equation $(\ref{monticeto})$, we get 
\begin{equation}\label{hyu}\aligned \partial_t &(\|{u(t)}\|_{L^2}^2+\alpha^2\|{u(t)}\|_{L^2}^2)\leq J_1+J_2+J_3,
\endaligned
\end{equation}
where 
\begin{equation*}\aligned J_1&=C\alpha^{-2}(\|u(t)\|^2_{L^2}+\alpha^2\|\nabla  {u(t)}\|_{L^2}^2)(\|{v(t)}\|_{L^p}+\|{v(t)}\|_{\dot{H^{2,p}}}),
\\ J_2&=\alpha^2C\|{u(t)}\|_{\dot{H}^{2,2}}(C\|{v(t)}\|_{L^p}-1),
\\ J_3&=-\|\nabla {u(t)}\|_{L^2}^2.
\endaligned
\end{equation*}

From equation $(\ref{uniform initial reg bound})$, for sufficiently small $\|v_0\|_{B^{3/p}_{p,q}}$, we have that  $C\|{v(t)}\|_{L^p}-1<0$ for all $t$.  This makes $J_2$ and $J_3$ negative, so $(\ref{hyu})$ becomes 
\begin{equation*}\aligned \partial_t &(\|{u(t)}\|_{L^2}^2+\alpha^2\|{u(t)}\|_{L^2}^2)
\\ \leq &C\alpha^{-2}(\|u(t)\|^2_{L^2}+\alpha^2\|\nabla  {u(t)}\|_{L^2}^2)(\|{v(t)}\|_{L^p}+
\|{v(t)}\|_{\dot{H^{2,p}}}).
\endaligned
\end{equation*}

Applying Gronwall's inequality gives  
\begin{equation*}\aligned &\|u(t)\|_{L^2}^2+\alpha^2\|u(t)\|_{L^2}^2
\\ \leq &(\|u(a)\|_{L^2}^2+\alpha^2\|u(a)\|_{L^2}^2) \exp\{C\alpha^{-2}\int_a^{T^*} \|v(s)\|_{H^{2,p}}ds\},
\endaligned
\end{equation*}
and an application of equation $(\ref{uniform v bound})$ completes the Lemma.  We observe that this result could be extended to higher dimensions by taking more care with the Sobolev embeddings.

\subsection{Proof of Lemma $\ref{mlansglobalthm2}$}
We first observe that, to prove Lemma $\ref{mlansglobalthm2}$, it is sufficient to take the supremum over all $t$ such that $\|u(t)\|_{\dot{H}^{2,2}}$ and $\|u(t)\|_{\dot{H}^{3,2}}$ are greater than one.  For the proof, we start with the standard form of the mLANS equation, apply $A$ to both sides and take the $L^2$ product with $Au$ to get
\begin{equation}\label{aldo}\aligned &(\partial_t A{u(t)},A{u(t)})+(A^2{u(t)},A{u(t)})
=I+J,
\endaligned
\end{equation}
where 
\begin{equation*}\aligned I&=-(AP^\alpha(\nabla_{u(t)} {u(t)}+\div(1+A\alpha^2)^{-1}(\nabla {u(t)} \nabla {u(t)})),A{u(t)}),
\\ J&= -(AP^\alpha(\div({u(t)}\tensor v)+\div(1+A\alpha^2)^{-1}(\nabla {u(t)}\nabla {v(t)})),A{u(t)}).
\endaligned
\end{equation*}
For the left hand side, we have 
\begin{equation}\label{aldo1}(\partial_t A{u(t)},A{u(t)})=\frac{1}{2}\partial_t \|{u(t)}\|^2_{\dot{H}^{2,2}}
\end{equation}
and 
\begin{equation}\label{aldo2}(A^2 {u(t)},A{u(t)})=(A^{3/2}{u(t)},A^{3/2}{u(t)})=\|{u(t)}\|^2_{\dot{H}^{3,2}}.
\end{equation}
Estimating $I$ and $J$ is significantly harder, and is the subject of the next two subsections.

\subsubsection{Estimating $I$}\label{Estimating I}
We start by re-writing $I$ as $I=K_1+K_2$, where 
\begin{equation*}\aligned K_1=-(AP^\alpha(\nabla_{u(t)} {u(t)}),A{u(t)}),
\\ K_2=-(AP^\alpha\div\tau^\alpha {u(t)},A{u(t)}).
\endaligned
\end{equation*}

We will make heavy use of the following Ladyzhenskaya inequality ($(5.3)$ in \cite{MS}) which holds in $\mathbb{R}^3$:
\begin{equation}\label{AgLa}\aligned
\|f\|_{\dot{H}^{r_1,2}}&\leq C\|f\|^{1-r_1/r_2}_{L^2}\|f\|^{r_1/r_2}_{\dot{H}^{r_2,2}},
\endaligned
\end{equation}

Starting with $K_1$, making liberal use of integration by parts, the product rule, and Holder's inequality, we have
\begin{equation}\label{glob1d}\aligned |K_1|&\leq (A^{1/2}(\nabla_{u(t)} {u(t)}),A^{3/2}{u(t)})
\\ &\leq C\|A^{3/2}{u(t)}\|_{L^2}(\|(A^{1/2}\nabla {u(t)}){u(t)}\|_{L^2}+\|(A^{1/2}{u(t)})\nabla {u(t)}\|_{L^2})
\\ &\leq C\|{u(t)}\|_{\dot{H}^{3,2}} (\|{u(t)}\|_{L^\infty} \|{u(t)}\|_{\dot{H}^{2,2}}+\|A^{1/2}{u(t)}\|_{L^\infty}\|{u(t)}\|_{\dot{H}^{1,2}}).
\endaligned
\end{equation}
By Sobolev embedding, we have
\begin{equation}\label{agla2}\aligned \|u\|_{L^\infty}&\leq \|{u(t)}\|_{L^2}+\|{u(t)}\|_{\dot{H}^{k_1}},
\\ \|A^{1/2}{u(t)}\|_{L^\infty}&\leq \|\nabla {u(t)}\|_{L^2}+\|{u(t)}\|_{\dot{H}^{k_2}},
\endaligned
\end{equation}
provided $k_1=3/2+\varepsilon$ and $k_2=5/2+\varepsilon$ for positive $\varepsilon$.  Recalling that Lemma $\ref{mlanssobglobal1}$ provides a uniform bound of $M$ on $\|u(t)\|_{H^{1,2}}$, we can now bound $K_1$ by 
\begin{equation}\label{glob1c}\aligned |K_1|\leq &C\|{u(t)}\|_{\dot{H}^{3,2}}\|{u(t)}\|_{\dot{H}^{2,2}}(M+\|{u(t)}\|_{\dot{H}^{k_1,2}})
\\ +&CM\|{u(t)}\|_{\dot{H}^{3,2}}(M+\|{u(t)}\|_{\dot{H}^{k_2,2}}).
\endaligned
\end{equation}

By $(\ref{AgLa})$, we have
\begin{equation}\aligned \label{agla3}\|{u(t)}\|_{\dot{H}^{2,2}}&=\|\nabla {u(t)}\|_{\dot{H}^{1,2}}
\leq C\|{u(t)}\|_{\dot{H}^{1,2}}^{1/2}\|{u(t)}\|_{\dot{H}^{3,2}}^{1/2}
\\ \|{u(t)}\|_{\dot{H}^{k_1,2}}&=\|\nabla {u(t)}\|_{\dot{H}^{k_1-1,2}} \leq
C\|{u(t)}\|^{1-(k_1-1)/2}_{\dot{H}^{1,2}} \|{u(t)}\|^{(k_1-1)/2}_{\dot{H}^{3,2}}
\\ \|{u(t)}\|_{\dot{H}^{k_2,2}}&=\|\nabla {u(t)}\|_{\dot{H}^{k_2-1,2}} \leq
C\|{u(t)}\|^{1-(k_2-1)/2}_{\dot{H}^{1,2}} \|{u(t)}\|^{(k_2-1)/2}_{\dot{H}^{3,2}}.
\endaligned
\end{equation}

Applying $(\ref{agla3})$ to $(\ref{glob1c})$ and recalling that we have assumed both $\|{u(t)}\|_{\dot{H}^{2,2}}$ and $\|{u(t)}\|_{\dot{H}^{3,2}}$ are no less than $1$, we have
\begin{equation}\label{glob1b}\aligned |K_1|\leq &C(M)(\|{u(t)}\|_{\dot{H}^{3,2}}^{3/2}+ \|{u(t)}\|_{\dot{H}^{3,2}}^{1+k_1/2}+\|{u(t)}\|_{\dot{H}^{3,2}}^{(k_2+1)/2}),
\endaligned
\end{equation}
where $C(M)$ indicates that $C$ is a function only of $M$.  Choosing $\varepsilon=1/4$, we get
\begin{equation}\aligned \label{glob1a}|K_1|\leq
&C(M)\|{u(t)}\|^{15/8}_{\dot{H}^{3,2}}
\endaligned
\end{equation}
which finishes our $K_1$ estimate.  

For $K_2$, using Holder's inequality, we have
\begin{equation}\aligned \label{glob1er}|K_2|&\leq \|{u(t)}\|_{\dot{H}^{2,2}}\|A^{1/2}(\nabla {u(t)}\nabla {u(t)})\|_{L^2}.
\\ &\leq C\|{u(t)}\|^2_{\dot{H}^{2,2}}\|\nabla {u(t)}\|_{L^\infty}.
\endaligned
\end{equation}
Using $(\ref{agla2})$ and $(\ref{agla3})$ gives
\begin{equation}\label{glob1b2}\aligned |K_2|&\leq C\|{u(t)}\|_{\dot{H}^2}^2(M+\|{u(t)}\|_{\dot{H}^{k_2,2}})
\\ &\leq CM^2\|{u(t)}\|_{\dot{H}^{3,2}}+ CM^{23/8}\|{u(t)}\|_{\dot{H}^{3,2}}^{15/8}
\leq C(M)\|{u(t)}\|_{\dot{H}^{3,2}}^{15/8},
\endaligned
\end{equation}
and this finishes our work on $K_2$.  Combining the estimate for $K_1$ (equation $(\ref{glob1a}))$ and the estimate for $K_2$ (equation $(\ref{glob1b2}))$, we bound $I$ by 
\begin{equation}\label{aldo16}\aligned |I|\leq &C(M)\|{u(t)}\|^{15/8}_{\dot{H}^3}.
\endaligned
\end{equation}
Applying Young's multiplicative inequality with $q=16/15$,  we get
\begin{equation}\label{aldo4}I\leq \varepsilon(\|{u(t)}\|_{\dot{H}^3}^{15/8})^{16/15}
+C(M)(\varepsilon)^{-1}.
\end{equation}

Choosing $\varepsilon=1/4$, our final bound for $I$ is 
\begin{equation}\label{mery1}I\leq \frac{1}{4}\|{u(t)}\|_{\dot{H}^3}^2
+C(M).
\end{equation}

Now we turn our attention to $J$.

\subsubsection{Estimating $J$}\label{Estimating J}

As in the preceding subsection, we begin by writing  $J$ as $J=L_1+L_2$, where 
\begin{equation*}\aligned L_1&=-(AP^\alpha(\div({u(t)}\tensor {v(t)})),A{u(t)}),
\\ L_2&=-(AP^\alpha\div(1-\alpha^2\triangle)^{-1}(\nabla {u(t)}\nabla {v(t)}),A{u(t)}).
\endaligned
\end{equation*}

Starting with $L_1$, making liberal use of integration by parts, the product rule, and Holder's inequality, we have
\begin{equation}\label{noire2}\aligned |L_1|&\leq (A^{1/2}(\div({u(t)}\tensor {v(t)})),A^{3/2}{u(t)})
\\ &\leq C\|A^{3/2}{u(t)}\|_{L^2}(\|{v(t)}(A{u(t)})\|_{L^2}+\|(A^{1/2}{v(t)}) (A^{1/2}{u(t)})\|_{L^2}+\|(A{v(t)}){u(t)}\|_{L^2})
\\ &\leq C\|{u(t)}\|_{\dot{H}^{3,2}}(\|{v(t)}\|_{L^\infty}\|A{u(t)}\|_{L^2}+\|A^{1/2}{u(t)}\|_{L^2}\|A^{1/2} {v(t)}\|_{L^\infty}+\|{u(t)}\|_{L^{\tilde{p}}}\|A{v(t)}\|_{L^p})
\\ &\leq C\|{u(t)}\|_{\dot{H}^{3,2}}(\|{v(t)}\|_{L^\infty}\|A{u(t)}\|_{L^2}+\|A^{1/2}{u(t)}\|_{L^2}\|A^{1/2}{v(t)}\|_{L^\infty} +\|A^{1/2}{u(t)}\|_{L^2}\|A{v(t)}\|_{L^p}),
\endaligned
\end{equation}
where $\tilde{p}$ is as in equation $(\ref{rubio1})$.  By Sobolev embedding (and recalling that $p>3$), we have 
\begin{equation*}\aligned \|{v(t)}\|_{L^\infty}+\|A^{1/2}{v(t)}\|_{L^\infty}+\|A{v(t)}\|_{L^p}\leq \|{v(t)}\|_{H^{2,p}}<N,
\endaligned
\end{equation*}
where the last inequality is due to equation $(\ref{uniform v bound})$.  Applying this to equation $(\ref{noire2})$, we get 
\begin{equation*}|L_1|\leq C(N)\|{u(t)}\|_{\dot{H}^{3,2}}(\|A{u(t)}\|_{L^2}+\|A^{1/2}{u(t)}\|_{L^2}).
\end{equation*}

As in the estimate for $K_1$, we use $(\ref{agla3})$ to get  
\begin{equation}\label{glob1ca}\aligned |L_1|&\leq C(N)\|{u(t)}\|_{\dot{H}^{3,2}}(\|{u(t)}\|_{\dot{H}^{1,2}}^{1/2}\|{u(t)}\|^{1/2}_{\dot{H}^{3,2}}+\|{u(t)}\|_{\dot{H}^{1,2}})
\\ &\leq C(N,M)\|{u(t)}\|_{\dot{H}^{3,2}}^{3/2},
\endaligned
\end{equation}
where we recall that $\|{u(t)}\|_{H^{1,2}}\leq M$.  This finishes our $L_1$ estimate.  For $L_2$, using Holder's inequality, we have
\begin{equation*}\aligned \label{glob1era}|L_2|&\leq \|{u(t)}\|_{\dot{H}^{2,2}}\|A^{1/2}(\nabla {u(t)}\nabla {v(t)})\|_{L^2}.
\\ &\leq C\|{u(t)}\|^2_{\dot{H}^{2,2}}\|\nabla {v(t)}\|_{L^\infty}\leq C(N)\|{u(t)}\|_{\dot{H}^{2,2}}^2.
\endaligned
\end{equation*}
Using $(\ref{agla3})$ gives
\begin{equation}\label{glob1b234}\aligned |L_2|&\leq C(N)\|{u(t)}\|_{\dot{H}^{3,2}}\|{u(t)}\|_{\dot{H}^{1,2}}\leq C(N,M)\|{u(t)}\|_{\dot{H}^{3,2}},
\endaligned
\end{equation}
and this finishes our work on $L_2$.  Combining equations $(\ref{glob1ca})$ and $(\ref{glob1b234})$, we bound $J$ by 
\begin{equation}\label{rahan}|J|\leq C(N,M)\|{u(t)}\|_{\dot{H}^{3,2}}^{3/2}.
\end{equation}

Applying Young's inequality for products (and choosing $\varepsilon=1/4$), we get 
\begin{equation}\label{mery2}|J|\leq \frac{1}{4}(\|{u(t)}\|_{\dot{H}^{3,2}}^{3/2})^{4/3}+C(N,M)=\frac{1}{4}\|{u(t)}\|_{\dot{H}^{3,2}}^2+C(N,M).
\end{equation} 

We conclude this section by combining equations $(\ref{mery1})$ and $(\ref{mery2})$ to get  
\begin{equation}\label{noire3}|I|+|J|\leq \frac{1}{2}\|{u(t)}\|_{\dot{H}^{3,2}}^2+C(N,M).
\end{equation}

\subsubsection{Prove of equation $(\ref{noire1})$}
Returning to equation $(\ref{aldo})$, and using $(\ref{aldo1})$, $(\ref{aldo2})$, and $(\ref{noire3})$, we get 
\begin{equation*}\partial_t \|{u(t)}\|_{\dot{H}^{2,2}}^2+\|{u(t)}\|_{\dot{H}^{3,2}}^2\leq  \frac{1}{2}\|{u(t)}\|_{\dot{H}^{3,2}}^2+C(N,M).
\end{equation*}

Subtracting $\|{u(t)}\|_{\dot{H}^{3,2}}^2$ from both sides, we finally get 
\begin{equation*}\partial_t \|{u(t)}\|_{\dot{H}^{2,2}}^2\leq  -\frac{1}{2}\|{u(t)}\|_{\dot{H}^{3,2}}^2+C(N,M)\leq C(N,M).
\end{equation*}
Integrating from $a$ to $t$, we get 
\begin{equation*}\|u(t)\|_{\dot{H}^{2,2}}^2\leq \|u(a)\|_{\dot{H}^{2,2}}^2+\int_a^{T^*} C(N,M)ds\leq K.
\end{equation*} 
Taking the supremum over $t\in [a,T^*)$ gives equation $(\ref{noire1})$ and completes the proof of Lemma \ref{mlansglobalthm2}.

\subsection{An abstract interpolation result}\label{an abstract interpolation result}
In this subsection we address the following result.
\begin{lemma}Let $v$ be as in Theorem $\ref{fozzy1}$, and let $u$ be the global solution to the mLANS equation given by Theorem $\ref{fozzy1}$.  Also assume $u_0+v_0\in [B^{3/p}_{p,q}(\mathbb{R}^3), B^{3/2}_{2,q}(\mathbb{R}^3)]_{\theta,q}$ for some $\theta\in (0,1)$.  Then, for all $t$, $u(t)+v(t)\in [B^{3/p}_{p,q}(\mathbb{R}^3), B^{3/2}_{2,q}(\mathbb{R}^3)]_{\theta,q}$.
\end{lemma}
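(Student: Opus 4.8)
The plan is to reduce the statement about $u(t)+v(t)$ to a statement about the solution map of the LANS equation, exploiting the fact that $w(t) = u(t)+v(t)$ solves the LANS equation with initial data $w_0 = u_0+v_0$. The key structural observation is that $U = B^{3/2}_{2,q}(\mathbb{R}^3)$ and $V = B^{3/\tilde p}_{\tilde p,q}(\mathbb{R}^3)$ are the endpoint spaces of a real interpolation scale, and that the LANS flow is well-behaved at both endpoints: by Theorem~\ref{mLANS main theorem} the mLANS equation has a global solution for any datum in $U$, and by Theorem~\ref{old local theorem} the LANS equation itself has a global solution for small data in $V$ (which $v$ is, by construction). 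Since $u_0 + v_0 \in [V, U]_{\theta,q}$, we may split $u_0+v_0 = a_0 + b_0$ with $a_0 \in V$ arbitrarily small in the $V$-norm and $b_0 \in U$.

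The first step is to recall the abstract persistence-of-interpolation lemma from \cite{galplan}: if a bilinear (or suitably controlled nonlinear) evolution has a globally defined solution operator on each of two spaces $U \hookrightarrow W \hookrightarrow V$, and if moreover the solution at the $V$-endpoint depends on the datum in a way compatible with the heat flow smoothing (which is exactly what equation~\eqref{hkb ext} and Theorem~\ref{old local theorem} provide), then the solution operator maps $[V,U]_{\theta,q}$ into $C([0,\infty):[V,U]_{\theta,q})$. The embedding chain $U \hookrightarrow W \hookrightarrow V$ holds by the Besov embeddings~\eqref{besov embedding}, as noted in the introduction. The second step is to verify the hypotheses of that abstract lemma in the present setting: this amounts to checking that the nonlinearity $\div(w\tensor w) + \div\tau^\alpha(w)$ of the LANS equation satisfies the bilinear estimates on both $U$ and $V$ that were already established in Section~\ref{Local Solutions} (for the $U$-scale, via the mLANS splitting) and in \cite{besovpaper} (for the $V$-scale). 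The third step is to invoke uniqueness: the local LANS solution $w$ with $w(0)=w_0$ produced by Theorem~\ref{old local theorem} must coincide with $u+v$ wherever both are defined, and since $u+v$ is global, so is $w$, and $w(t) = u(t)+v(t) \in [V,U]_{\theta,q}$ for all $t$.

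The main obstacle will be the nonlinear (rather than bilinear) character of the Reynolds-stress term $\tau^\alpha$ together with the fact that the abstract argument in \cite{galplan} is stated for Navier-Stokes, whose nonlinearity is a pure quadratic. One must check that the extra term $\div\tau^\alpha(w,w) = \tfrac{\alpha^2}{2}\div(1-\alpha^2\triangle)^{-1}[Def(w)\cdot Rot(w) + \cdots]$, which is still quadratic in $w$ but with the smoothing factor $(1-\alpha^2\triangle)^{-1}$ and an extra derivative, obeys the same type of bicontinuous bilinear bound $B^s_{p,q} \times B^s_{p,q} \to C^T_{a;\cdot}$ used in the abstract scheme; the smoothing operator $(1-\alpha^2\triangle)^{-1}$ only helps, and Proposition~\ref{new product estimate} together with Proposition~\ref{hkb} supply exactly the needed product and heat-kernel estimates, so this is a matter of bookkeeping rather than a genuine new difficulty. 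A secondary technical point is that one needs the smallness of $\|v_0\|_V$ — and hence the smallness of $\|a_0\|_V$ in the splitting — to guarantee that the $V$-endpoint solution is global, which is why we arranged $\|v_0\|_V$ arbitrarily small at the outset; this is precisely the scenario in which the abstract lemma of \cite{galplan} applies.
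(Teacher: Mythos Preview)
Your proposal and the paper both invoke the abstract interpolation result from \cite{galplan}, so at the level of strategy you are on target. However, the paper's proof is considerably more direct than what you outline, and you misidentify which hypotheses need checking.

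The paper simply observes that this lemma is a special case of Section~4.4 in \cite{galplan}, whose hypotheses (equations~(4.11) and~(4.12) there) reduce here to two things: (i) the embedding chain for the interpolation space, and (ii) the uniform-in-time norm bounds
\[
\|v(t)\|_{B^{3/p}_{p,q}}\leq C\|v_0\|_{B^{3/p}_{p,q}}, \qquad \|u(t)\|_{B^{3/2}_{2,q}}\leq C(\|v_0\|_{B^{3/p}_{p,q}})\|u_0\|_{B^{3/2}_{2,q}}.
\]
Both are already in hand: (i) is Besov embedding, and (ii) follows from Theorem~\ref{old local theorem} and Theorem~\ref{fozzy1} respectively. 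That is the entire proof.

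Your detour through bilinear estimates on the nonlinearity, a fresh splitting $u_0+v_0=a_0+b_0$, and a uniqueness argument is unnecessary for \emph{this} lemma. The splitting $w_0=u_0+v_0$ is the one already fixed by hypothesis; you do not re-split. The bilinear structure of the nonlinearity and the behavior of $\tau^\alpha$ were relevant upstream, in proving Theorems~\ref{old local theorem} and~\ref{fozzy1}, but here those theorems are inputs and only their conclusions (the norm bounds above) are used. Likewise, the uniqueness step you describe as ``step three'' is not part of this lemma at all --- it belongs to the surrounding argument in the introduction, where one identifies $u+v$ with the local LANS solution $w$. In short: your plan would get there, but you are reconstructing the mechanism of \cite{galplan} rather than simply verifying its stated hypotheses, and your ``main obstacle'' is not an obstacle for this lemma.
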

This is a specific case of the result proven in Section $4.4$ in \cite{galplan}.  As stated in equations $(4.11)$ and $(4.12)$ there, the two key requirements, adapted to this case, are that 
\begin{equation*}B^{3/p}_{p,q}(\mathbb{R}^3)\hookrightarrow [B^{3/p}_{p,q}(\mathbb{R}^3), B^{3/2}_{2,q}(\mathbb{R}^3)]_{\theta,q}\hookrightarrow B^{3/2}_{2,q}(\mathbb{R}^3),
\end{equation*}
and that 
\begin{equation*}\aligned \|v(t)\|_{B^{3/p}_{p,q}}&\leq C\|v_0\|_{B^{3/p}_{p,q}}
\\ \|u(t)\|_{B^{3/2}_{2,q}}&\leq C(\|v_0\|_{B^{3/p}_{p,q}})\|u_0\|_{B^{3/2}_{2,q}}.
\endaligned
\end{equation*}
The first requirement follows directly from the Besov embeddings in equation $(\ref{besov embedding})$.  For the second requirement, the first part follows from Theorem $\ref{old local theorem}$.  The second part follows from the fact that, by Theorem $\ref{fozzy1}$, $u\in BC([0,T):B^{3/2}_{2,q}(\mathbb{R}^3))$ for any $T>0$.  This result, combined with Theorem $\ref{fozzy1}$, completes the proof of Theorem $\ref{main theorem}$.

\section{Higher regularity for the local existence result}\label{Extending the local result: Higher regularity}
Here we prove Corollary $\ref{cor34}$.  The proof is an induction argument, similar to the one in \cite{sobpaper} applied to the LANS equation (which was in turn inspired by the argument in \cite{katoinduction} for the Navier-Stokes equation).

As usual, before stating the theorem, we construct a solution to the LANS equation $v$.  Here, we pick $p>n$, and let $v_0\in B^{n/p}_{p,q}(\mathbb{R}^n)$ with $\|v_0\|_{B^{n/p}_{p,q}}$ arbitrarily small, so by Theorem $\ref{old local theorem}$ and Corollary $\ref{old local extension}$, we have a global solution $v$ to the LANS equation where $v\in BC([0,T):B^{n/p}_{p,q}(\mathbb{R}^n)\cap \dot{C}^T_{a;r,p,q})$, with $a=(r-n/p)/2$ for any real $r>n/p$.  
\begin{lemma}\label{higher reg}With $v$ as in the preceding paragraph, let $u_0\in B^{n/2}_{2,q}(\mathbb{R}^n)$ and let $u$ be the associated unique solution to the mLANS equation with initial data $u_0$ such that 
\begin{equation*}u\in BC([0,T):B^{n/2}_{2,q}(\mathbb{R}^n))\cap \dot{C}^T_{({s}-n/p)/2;{s},2,q},
\end{equation*}
where $0<{s}-n/2<1$.  Then for all $k\geq s$, we have that $u\in \dot{C}^T_{(k-n/2)/2;k,2,q}$.
\end{lemma}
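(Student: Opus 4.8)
The plan is to argue by induction on the regularity index, raising the exponent in the parabolic smoothing space in steps of size less than $1$ (in fact of size $s - n/2 \in (0,1)$, or any fixed amount strictly below the threshold that the nonlinear estimates allow). The base case $k = s$ is exactly the conclusion of Theorem \ref{new local version 34}. For the inductive step, suppose we already know $u \in \dot{C}^T_{(k-n/2)/2;k,2,q}$ for some $k \geq s$, and let $k' = k + \delta$ for a small fixed $\delta>0$ to be chosen. We write the mild formulation
\begin{equation*}
u(t) = e^{t\triangle}u_0 + \int_0^t e^{(t-\tau)\triangle}\,\bigl(V(u(\tau)) + W(u(\tau),v(\tau))\bigr)\,d\tau,
\end{equation*}
with $V$ and $W$ as in Section \ref{Local Solutions}, and estimate the $B^{k'}_{2,q}$ norm of each piece, multiplied by $t^{(k'-n/2)/2}$. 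For the linear term, Proposition \ref{hkb} gives $\|e^{t\triangle}u_0\|_{B^{k'}_{2,q}} \leq C t^{-(k'-n/2)/2}\|u_0\|_{B^{n/2}_{2,q}}$, which is exactly what is needed (and the $\dot{C}$-decay at $t\to 0^+$ follows from the density argument behind \eqref{hkb ext}).

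The heart of the matter is the Duhamel term. For the $\div(u\otimes u)$ part of $V$, I would apply Proposition \ref{hkb} to pull $\div$ and the heat semigroup out as a factor $|t-\tau|^{-1/2 - (k' - \sigma)/2}$ acting on $\|u(\tau)\otimes u(\tau)\|_{B^{\sigma}_{p_0,q}}$ for a suitable auxiliary exponent $\sigma$ and integrability $p_0$, then estimate the product by Proposition \ref{new product estimate}, distributing the available regularity $k$ on one factor and $n/2$ (or a bit less, to keep the hypothesis $s_i < n/p_i$ of Proposition \ref{new product estimate} satisfied) on the other. Because $u(\tau)$ lives in $B^{k}_{2,q}$ with the weight $\tau^{-(k-n/2)/2}$ and in $B^{n/2}_{2,q}$ uniformly, the resulting $\tau$-integral is of the form $\int_0^t |t-\tau|^{-1+\eta}\tau^{-(k-n/2)/2}\,d\tau$, which converges and produces the correct power $t^{-(k'-n/2)/2}\cdot(\text{small positive power of }T)$ provided $\delta$ is small enough that all the exponent inequalities (the $|t-\tau|$ exponent stays below $1$, and the Beta-integral converges) hold. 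The term $\div(1-\triangle)^{-1}(\nabla u \nabla u)$ is handled the same way, with the bonus that $(1-\triangle)^{-1}\nabla$ contributes $+1$ to the smoothing. The mixed terms $W(u,v)$ involving $v$ are actually easier: by \eqref{google} we may place all the derivatives we like on $v$ (it is in $B^{r}_{p,q}$ for every $r$, with the weight $\tau^{-(r-n/p)/2}$ and $b<1$), keeping $u$ at the regularity $k$ we already control, and then Proposition \ref{new product estimate} closes the estimate just as in the $I$- and $J$-estimates of Section \ref{Local Solutions}.

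Assembling these bounds shows that $u \in \dot{C}^T_{(k'-n/2)/2;k',2,q}$, completing the induction after finitely many steps to reach any prescribed $k$; the only subtlety is that $\delta$ must be chosen uniformly (depending only on $n$, $p$, and the gap to the relevant thresholds) so that the induction does not stall, and then finitely many steps of that fixed size exhaust any finite target $k$. The main obstacle I anticipate is precisely the bookkeeping of the exponent constraints: one must check that in the product estimate the two regularity indices assigned to the factors are each strictly below $n/p_i$ while summing to something large enough, and simultaneously that the heat-kernel time exponent is $<1$ for integrability near $\tau = t$ and that the factor $\tau^{-(k-n/2)/2}$ is integrable near $\tau = 0$ — these push in opposite directions and force $\delta$ small, but since there is no upper cap on $k$ other than finiteness, small steps suffice. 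A final remark: this is the mLANS analogue of Proposition 8 in \cite{sobpaper}, whose origin is the induction of \cite{katoinduction}, and Corollary \ref{old local extension} is the $u=0$ special case of the same argument applied directly to the LANS equation.
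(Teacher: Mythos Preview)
There is a genuine gap in your inductive step: the direct Duhamel estimate you propose ceases to converge once $k \geq n/2 + 2$. Concretely, after distributing the regularity as you suggest---one factor of $u$ in $B^{k}_{2,q}$ (carrying the weight $\tau^{-(k-n/2)/2}$) and the other in $B^{n/2}_{2,q}$ (uniform)---the time integral you arrive at is
\[
\int_0^t |t-\tau|^{-1+\eta}\,\tau^{-(k-n/2)/2}\,d\tau,
\]
and this Beta integral diverges at $\tau=0$ as soon as $(k-n/2)/2\geq 1$. Shrinking your increment $\delta$ has no effect on this: the exponent at $\tau=0$ is fixed by the \emph{current} regularity level $k$, not by the step size. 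So your induction stalls at $k=n/2+2$ (and the lemma is stated for all $k\geq s$).

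The paper's proof avoids this by the Kato trick from \cite{katoinduction}: one sets $w=t^{\delta}u$ (here $\delta>0$ is an auxiliary exponent, unrelated to the step size) and derives the equation for $w$, which has the extra linear term $\delta t^{-1}w$ and nonlinear terms multiplied by $t^{-\delta}$. After Duhamel and substituting back $u=t^{-\delta}w$ (noting $w(0)=0$ kills the free evolution), every integrand acquires a factor $s^{\delta}$ or $s^{\delta-1}$; the problematic power becomes $s^{\delta-(k-n/2)/2}$, and choosing $\delta$ large (depending on $k$) restores integrability at $s=0$ for arbitrary $k$. The paper points this out explicitly after its estimate of the term $I$: without the $t^{\delta}$ modification one would need $(k-n/2)/2<1$. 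The remaining bookkeeping of exponents and the product estimates are essentially as you describe, but only after this substitution is made.
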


\begin{proof}
We start with the solution to the mLANS equation $u$.  Then let $\delta>0$ be arbitrary, and let $w=t^\delta u$.  We note that $w(0)=0$.  Then 
\begin{equation*}\aligned \partial_t w&=\delta t^{\delta-1} u+t^\delta \partial_t u
\\ &=\delta t^{-1} w+t^\delta (\triangle u-\div(u\tensor u+\tau^\alpha (u,u))-\div(u\tensor v+\tau^\alpha(u,v)))
\\ &=\delta t^{-1} w+ \triangle w-t^{-\delta}\div (w\tensor w+\tau^\alpha (w,w))-\div(w\tensor v+\tau^\alpha(w,v)).
\endaligned
\end{equation*}
Applying Duhamel's principle, we get 
\begin{equation*}\aligned w=e^{t\triangle}w_0+&\int_0^t e^{(t-s)\triangle}s^{-1}w(s)ds+\int_0^t e^{(t-s)\triangle}s^{-\delta}(\div(w(s)\tensor w(s)+\tau^\alpha(w(s),w(s))))ds
\\ + &\int_0^t e^{(t-s)\triangle}(\div(w(s)\tensor v(s)+\tau^\alpha(w(s),v(s))))ds.
\endaligned
\end{equation*}
Recalling that $w(0)=w_0=0$, and substituting $w=t^\delta u$, we get 
\begin{equation*}\aligned u=&t^{-\delta}\int_0^t e^{(t-s)\triangle}s^{\delta-1}u(s)ds+t^{-\delta}\int_0^t e^{(t-s)\triangle}s^\delta(\div(u(s)\tensor u(s)+\tau^\alpha(u(s),u(s)))) ds
\\ +&t^{-\delta}\int_0^t e^{(t-s)\triangle}s^\delta(\div(u(s)\tensor v(s)+\tau^\alpha(u(s),v(s)))) ds.
\endaligned
\end{equation*}

Now we are ready to apply the induction.  We have by assumption that $u$ is in $\dot{C}^T_{(s-n/2)/2;s,2,q}$, where $s> 1$.  For induction, we assume this solution $u$ is also in $\dot{C}^T_{(k-n/2)/2;k,2,q}$, and seek to show that $u$ is in $\dot{C}^T_{(k+h-n/2)/2;k+h,2,q}$, where $0<h<1$ is fixed and will be chosen later.  We have  
\begin{equation*}\aligned \|u\|_{B^{k+h}_{2,q}}&\leq I+J_1+J_2+K_1+K_2,
\endaligned
\end{equation*}
with $I$, $J_1$, $J_2$, $K_1$, and $K_2$ defined by  
\begin{equation*}\aligned I&=t^{-\delta}\int_0^t \|e^{(t-s)\triangle} s^{\delta-1}u(s)\|_{B^{k+h}_{2,q}}ds 
\\ J_1&=t^{-\delta}\int_0^t \|e^{(t-s)\delta} s^{\delta} (\div(1-\alpha^2\triangle)^{-1}(\nabla u(s)\nabla u(s)))\|_{B^{k+h}_{2,q}}ds       
\\ J_2&=t^{-\delta}\int_0^t \|e^{(t-s)\delta} s^{\delta}(\div(u(s)\tensor u(s)))\|_{B^{k+h}_{2,q}}ds
\\ K_1&= t^{-\delta}\int_0^t \|e^{(t-s)\delta} s^{\delta} (\div(1-\alpha^2\triangle)^{-1}(\nabla u(s)\nabla v(s)))\|_{B^{k+h}_{2,q}}ds    
\\ K_2&=t^{-\delta}\int_0^t \|e^{(t-s)\delta} s^{\delta}(\div(u(s)\tensor v(s)))\|_{B^{k+h}_{2,q}}ds,
\endaligned
\end{equation*}
where, as usual, we have suppressed terms from $\tau^\alpha$ that are controlled by the terms we included.  The $I$, $J_1$, and $J_2$ terms are the terms from the LANS equation, while $K_1$ and $K_2$ are the terms resulting from the modification of the LANS equation.  We address $I$, $J_1$, and $J_2$ first. 

\subsection{Bounding $I$, $J_1$, and $J_2$}
Starting with $I$, we have  
\begin{equation}\label{fsu1}\aligned I&\leq t^{-\delta}\int_0^t |t-s|^{-h/2}s^{\delta-1}\|u(s)\|_{B^{k}_{2,q}}
\\ &\leq t^{-\delta}\|u\|_{(k-{n/2})/2;k,2,q}\int_0^t |t-s|^{-h/2}s^{\delta-1-(k-{n/2})/2}ds
\\ &\leq C\|u\|_{(k-{n/2})/2;k,2,q}t^{-\delta}t^{-h/2}t^{\delta-1-(k-{n/2})/2+1}
\\ &\leq Ct^{-(k+h-{n/2})/2}\|u\|_{(k-{n/2})/2;k,2,q},
\endaligned
\end{equation}
provided 
\begin{equation*}\aligned 1&>h/2,
\\ -1&<\delta-1-(k-{n/2})/2,
\endaligned
\end{equation*}
which clearly holds for sufficiently large $\delta$.  We observe that, without modifying the PDE to include these $t^\delta$ terms, we would need $(k-{n/2})/2$ to be less than $1$, which does not hold for large $k$.

For $J_1$, we choose $\tilde{r}=n/2-1-\varepsilon$, and with $n/\tilde{p}=n/2+\tilde{r}$, we have 
\begin{equation}\label{fsu2}\aligned J_1\leq &t^{-\delta}\int_0^t|t-s|^{-(h+n/\tilde{p}-n/2)/2}s^{\delta} \|\div(1-\triangle)^{-1}(\nabla u \nabla u)\|_{B^{k}_{\tilde{p},q}}ds 
\\ \leq &t^{-\delta}\int_0^t|t-s|^{-(h-n/2+\tilde{r})/2}s^{\delta} \|(\nabla u \nabla u)\|_{B^{k-1}_{\tilde{p},q}}ds 
\\ \leq &t^{-\delta}\int_0^t|t-s|^{-(h-n/2+\tilde{r})/2}s^{\delta} \|\nabla u \|_{B^{k-1}_{2,q}}\|\nabla u\|_{B^{\tilde{r}}_{2,q}}ds 
\\ \leq &t^{-\delta}\int_0^t|t-s|^{-(h-n/2+\tilde{r})/2}s^{\delta} \|u\|_{B^{k}_{2,q}}\|u\|_{B^{n/2-\varepsilon}_{2,q}}ds 
\\ \leq &t^{-\delta}\|u\|_{(k-{n/2})/2;k,2,q}\|u\|_{0;n/2,2,q} \int_0^t|t-s|^{-(h+n/2-\tilde{r})/2}s^{\delta-(k-{n/2})/2}ds 
\\ \leq &t^{-(h+k-\tilde{r})/2+1}\|u\|_{(k-{n/2})/2;k,2,q}\|u\|_{0;n/2,2,q}
\\ \leq &t^{-(h+k-n/2)/2+(1-\varepsilon)/2}\|u\|_{(k-{n/2})/2;k,2,q}\|u\|_{0;n/2,2,q},
\endaligned
\end{equation}
provided 
\begin{equation*}\aligned \delta&>(k-{n/2})/2+(1-{n/2})/2,
\\ 2&>h+{n/2},
\endaligned
\end{equation*}  
and we again see that this is easily satisfied by choosing $\delta$ large and $h$ small.  For $J_2$, we define $\tilde{s}=n/2-\varepsilon$ and $n/\tilde{q}=n/2+\tilde{s}$, and have 
\begin{equation}\label{fsu3}\aligned J_2\leq &t^{-\delta}\int_0^t|t-s|^{-(h+1+n/\tilde{q}-{n/2})/2}s^{\delta} \|u\tensor u\|_{B^{k}_{\tilde{q},q}}ds 
\\ \leq &t^{-\delta}\int_0^t|t-s|^{-(h+1+n/2-\tilde{s})/2}s^{\delta} \|u\|_{B^{k}_{2,q}}\|u\|_{B^{n/2}_{2,q}}ds 
\\ \leq&t^{-\delta}\|u\|_{(k-{n/2})/2;k,2,q}\|u\|_{0;{n/2},2,q} \int_0^t|t-s|^{-(h+1-\varepsilon)/2}s^{\delta-(k-{n/2})/2}ds 
\\ \leq &t^{-(h+k-n/2)/2+1/2+\varepsilon}\|u\|_{(k-{n/2})/2;k,2,q}\|u\|_{0;{n/2},2,q},
\endaligned
\end{equation}
provided 
\begin{equation*}\aligned 1&>h-\varepsilon, 
\\ -1&<\delta-(k-{n/2})/2.
\endaligned
\end{equation*}
Combining equations $(\ref{fsu1})$, $(\ref{fsu2})$ and $(\ref{fsu3})$, we have that, for $h$ small enough and $\delta$ large enough, 
\begin{equation}\label{fsu4}I+J_1+J_2\leq Ct^{-(h+k-n/2)/2}\|u\|_{(k-{n/2})/2;k,2,q}\|u\|_{0;{n/2},2,q}.
\end{equation}
Now we turn our attention to $K_1$ and $K_2$.

\subsection{Bounding $K_1$ and $K_2$}

Starting with $K_1$, Defining $n/\tilde{p}=n/p+n/2$ and $a=(r-n/p)/2$, we have 
\begin{equation}\label{jet1}\aligned &t^{-\delta}\int_0^t|t-s|^{-(h+n/\tilde{p}-n/2)/2}s^{\delta} \|\div(1-\triangle)^{-1}(\nabla u \nabla v)\|_{B^{k}_{\tilde{p},q}}ds 
\\ \leq &t^{-\delta}\int_0^t|t-s|^{-(h+n/p)/2}s^{\delta} \|(\nabla u \nabla v)\|_{B^{k-1}_{\tilde{p},q}}ds 
\\ \leq &t^{-\delta}\int_0^t|t-s|^{-(h+n/p)/2}s^{\delta} \|\nabla u \|_{B^{k-1}_{2,q}}\|\nabla v\|_{L^p}ds 
\\ \leq &t^{-\delta}\int_0^t|t-s|^{-(h+n/p)/2}s^{\delta} \| u \|_{B^{k}_{2,q}}\|v\|_{B^{r}_{p,q}}ds 
\\ \leq &t^{-\delta}\|u\|_{(k-n/2)/2;k,2,q}\|v\|_{a;r,p,q} \int_0^t|t-s|^{-(h+n/p)/2}s^{\delta-(k-n/2)/2-a}ds 
\\ \leq &t^{-(h+k-n/2)/2+1-r/2}\|u\|_{(k-n/2)/2;k,2,q}\|v\|_{a;r,p,q},
\endaligned
\end{equation}
provided 
\begin{equation*}\aligned \delta&>(k-n/2)/2+(1-n/p)/2,
\\ r&<2,
\\ 2&>h+n/p,
\endaligned
\end{equation*}
all of which are easily satisfied by a sufficiently large choice of $\delta$ and a sufficiently small choice of $h$.  

For $K_2$, we have  
\begin{equation}\label{jet2}\aligned &t^{-\delta}\int_0^t|t-s|^{-(h+1+n/\tilde{p}-n/2)/2}s^{\delta} \|u\tensor u\|_{B^{k}_{\tilde{p},q}}ds 
\\ \leq &t^{-\delta}\int_0^t|t-s|^{-(h+1+n/p)/2}s^{\delta} \|u\|_{B^{k}_{2,q}}\|v\|_{L^p}ds 
\\ \leq&t^{-\delta}\|u\|_{(k-n/2)/2;k,2,q}\|v\|_{0;n/p,p,q} \int_0^t|t-s|^{-(h+1+n/p)/2}s^{\delta-(k-n/2)/2}ds 
\\ \leq &t^{-(h+k-n/2)/2+(1-n/p)/2}\|u\|_{(k-n/2)/2;k,2,q}\|v\|_{0;n/p,p,q},
\endaligned
\end{equation}
provided 
\begin{equation*}\aligned 1&>h+n/p,
\\ \delta&>(k-n/2)/2.
\endaligned
\end{equation*}  
So we have that 
\begin{equation}\label{fsu5}K_1+K_2\leq t^{-(h+k-n/2)/2}\|u\|_{(k-n/2)/2;k,2,q}\|v\|_{(r-n/p)/2;r,p,q}.
\end{equation}

\subsection{Finishing the proof of Lemma $\ref{higher reg}$}

Using equations $(\ref{fsu4})$ and $(\ref{fsu5})$, we get 
\begin{equation*}\aligned \|u\|_{H^{k+h,p}}&\leq Ct^{(k+h-n/2)/2}\|u\|_{(k-n/2)/2;k,2,q}(|u\|_{0;{n/2},2,q}+ \|v\|_{(r-n/p)/2;r,p,q})
\endaligned
\end{equation*}
which immediately gives
\begin{equation*}\aligned \|u\|_{(k+h-{n/2})/2;k+h,p}\leq C\|u\|_{(k-n/2)/2;k,2,q}(|u\|_{0;{n/2},2,q}+ \|v\|_{(r-n/p)/2;r,p,q}),
\endaligned
\end{equation*}
which proves the desired result.  We remark that $\delta$ is chosen after beginning the induction step, while the appropriate value of $h$ is fixed by the choices of $n,$ $p,$ and ${n/2}$.
\end{proof}

\section{Appendix: A Modified Product Estimate}\label{A Modified Product Estimate}
In this appendix we prove Proposition $\ref{new product estimate}$, which can be found in Corollary $1.3.1$ in \cite{chemin}.  Before beginning, we establish another result for the Littlewood-Paley operators and make a slight notational change.  First, we observe that, by changing variables, 
\begin{equation}\label{omega results}\|\psi_j\|_{L^p}\leq 2^{jn/p'}\|\psi_0\|_{L^p}\leq C2^{jn/p'},
\end{equation}
where $p'$ is the Holder' conjugate to $p$, i.e. $1=1/p+1/p'$.

Next, we make a slight notational change.  For $j>0$, we leave $\psi_j$ as defined in Section $\ref{Besov spaces}$.  For $j=0$, we set $\psi_0=\Psi$, so $\hat{\psi_0}$ is now supported on the ball centered at the origin of radius $1/2$ and $\triangle_0 f=\psi_0 *f=\Psi *f$.  Then the Besov norm can be defined by 
\begin{equation*}\|f\|_{B^r_{p,q}}=\(\sum_{j=0}^\infty 2^{rjq}\|\triangle_j u\|_{L^p}^q\)^{1/q}.
\end{equation*}
We are now ready to prove Proposition $\ref{new product estimate}$.

\begin{proof}[Proposition $\ref{new product estimate}$]We start by taking the $L^p$ norm of equation $(\ref{bony256})$, and get:
\begin{equation*}\aligned \|\triangle_j (fg)\|_{L^p}\leq &\sum_{k=-3}^3 \|\triangle_j (S_{j+k-3}f\triangle_{j+k} g)\|_{L^p}+ \sum_{k=-3}^3 \|\triangle_j (S_{j+k-3}g\triangle_{j+k} f)\|_{L^p}
\\ +&\sum_{k>j-4}\|\triangle_j \(\triangle_k f\sum_{l=-2}^2 \triangle_{k+l}g\)\|_{L^p}.
\endaligned
\end{equation*}
We first observe that, without loss of generality, we can set $k=l=0$ in the finite sums and replace $k>j-4$ with $k>j$.  Doing so, we get 
\begin{equation*}\aligned \|\triangle_j (fg)\|_{L^p}\leq &\|\triangle_j (S_{j-3}f\triangle_{j} g)\|_{L^p}+ \|\triangle_j (S_{j-3}g\triangle_{j} f)\|_{L^p}
\\ +&\sum_{k>j}\|\triangle_j \(\triangle_k f\triangle_{k}g\)\|_{L^p}.
\endaligned
\end{equation*}

Starting with the first term, and defining $\tilde{p}$ by $1+1/p=1/\tilde{p}+1/p_2$, we have 
\begin{equation*}\aligned \|\triangle_j (S_{j-3}f\triangle_{j} g)\|_{L^p}&\leq \|\psi_j\|_{L^{\tilde{p}}}\|\triangle_j f S_{j-3} g\|_{L^{p_2}} 
\leq C2^{jn/\tilde{p}'}\|\triangle_j g\|_{L^{p_2}}\|S_{j-3}f\|_{L^\infty} 
\\ &\leq C2^{jn/\tilde{p}'} \|\triangle_j g\|_{L^{p_2}}\sum_{m<j-3}\|\triangle_m f\|_{L^\infty} 
\\ &\leq C2^{jn(1/p_2-1/p)/\tilde{p}'} \|\triangle_j g\|_{L^{p_2}}\sum_{m<j-3}2^{mn/p_1}\|\triangle_m f\|_{L^{p_1}},
\endaligned
\end{equation*}
where we used Young's inequality, equation $(\ref{omega results})$, Holder's inequality, and finally Bernstein's inequality.  

A similar calculation for the second term yields
\begin{equation*}\|\triangle_j (S_{j-3}g\triangle_{j} f)\|_{L^p}\leq C2^{jn(1/p_1-1/p)} \|\triangle_j f\|_{L^{p_2}}\sum_{m<j-3}2^{mn/p_2}\|\triangle_m g\|_{L^{p_1}}.
\end{equation*}

For the third term, we have 
\begin{equation*}\aligned \sum_{k>j} \|\triangle_j(\triangle_k f \triangle_k g\|_p)
&\leq \|\psi_j \|_{\tilde{q}}\sum_{k>j}\|\triangle_k u \triangle_k v\|_{L^q}
\\ &\leq 2^{jn/\tilde{p}'}\sum_{k>j}\|\triangle_k f\|_{p_1}\|\triangle_k g\|_{p_2}
\\ &\leq 2^{jn(1/p-1/p_1-1/p_2)}\sum_{k>j} \|\triangle_k f\|_{p_1}\|\triangle_k g\|_{p_2},
\endaligned
\end{equation*}
where $1+1/p=1/\tilde{q}+1/q$ and $1/q=1/p_1+1/p_2$. 

So we have that 
\begin{equation}\label{chuck1}\aligned &\|\triangle_j (fg)\|_{L^p}\leq 2^{jn(1/p_2-1/p)} \|\triangle_j g\|_{L^{p_2}}\sum_{m<j-3}2^{jn/p_1}\|\triangle_m f\|_{L^{p_1}}
\\ +&2^{jn(1/p_1-1/p)} \|\triangle_j f\|_{L^{p_1}}\sum_{m<j-3}2^{jn/p_2}\|\triangle_m g\|_{L^{p_2}}
\\ +&2^{jn(1/p-1/p_1-1/p_2)}\sum_{k>j} \|\triangle_k f\|_{p_1}\|\triangle_k g\|_{p_2}
\endaligned
\end{equation}

Multiplying $(\ref{chuck1})$ by $2^{j(s_1+s_2-n(1/p_2+1/p_1-1/p))}$ and taking the $l^q$ norm in $j$, we get 
\begin{equation*}\|fg\|_{B^s_{p,q}}\leq I+J+K,
\end{equation*}
where 
\begin{equation*}\aligned I&=\(\sum_j 2^{(s_1+s_2-n/p_1)jq} \|\triangle_j g\|_{L^{p_2}}^q(\sum_{m<j-3}2^{mn/p_1}\|\triangle_m f\|_{L^{p_1}})^q\)^{1/q},
\\ J&=\(\sum_j 2^{(s_1+s_2-n/p_2)jq} \|\triangle_j f\|^q_{L^{p_1}}(\sum_{m<j-3}2^{mn/p_2}\|\triangle_m g\|_{L^{p_2}})^q\)^{1/q},
\\ K&=\(\sum_j (2^{j(s_1+s_2)}\sum_{k>j} \|\triangle_k f\|_{p_1}\|\triangle_k g\|_{p_2})^q\)^{1/q}.
\endaligned
\end{equation*}

For $I$, we have 
\begin{equation*}\aligned I&\leq \(\sum_j 2^{(s_1+s_2-n/p_1)jq} \|\triangle_j g\|_{L^{p_2}}^q(\sum_{m<j-3}2^{jn/p_1}\|\triangle_m f\|_{L^{p_1}})^q\)^{1/q}
\\ &\leq \(\sum_j (2^{js_2}\|\triangle_j g\|_{L^{p_2}})^q(\sum_{m<j-3}2^{m(n/p_1+s_1-n/p_1)}2^{(j-m)(s_1-n/p_1)}\|\triangle_m f\|_{L^{p_1}})^q\)^{1/q}
\\ &\leq \|f\|_{B^{s_1}_{p_1,\infty}}\sum_k 2^{-(s_1-n/p_2)}\(\sum_j (2^{js_2}\|\triangle_j g\|_{L^{p_2}})\)^{1/q}
\\ &\leq \|f\|_{B^{s_1}_{p,q}}\|g\|_{B^{s_2}_{s_2,q}},
\endaligned
\end{equation*}
provided $s_1<n/p_1$.  A similar calculation for $J$ yields 
\begin{equation*}\aligned J&\leq \|f\|_{B^{s_1}_{p,q}}\|g\|_{B^{s_2}_{s_2,q}},
\endaligned
\end{equation*}
provided $s_2<n/p_2$.  For $K$, we have, using Young's inequality for sums,  
\begin{equation*}\aligned K&=\(\sum_j (\sum_{k>j}2^{(j-k)(s_1+s_2)} 2^{ks_1}\|\triangle_k f\|_{p_1}2^{ks_2}\|\triangle_k g\|_{p_2})^q\)^{1/q}
\\ &\leq \|g\|_{B^{s_2}_{p_2,\infty}}\(\sum_j (\sum_{k>j}2^{(j-k)(s_1+s_2)} 2^{ks_1}\|\triangle_k f\|_{p_1})^q\)^{1/q}
\\ &\leq \|g\|_{B^{s_2}_{p_2,\infty}}\sum_{k}2^{-k(s_1+s_2)}\(\sum_k 2^{ks_1}\|\triangle_k f\|_{p_1})^q\)^{1/q}
\\ &\leq C\|f\|_{B^{s_1}_{p_1,q}}\|g\|+{B^{s_2}_{p_2,q}},
\endaligned
\end{equation*}
provided $s_1+s_2>0$.  This finishes the proposition.
\end{proof}

\bibliographystyle{amsplain}
\bibliography{references}

\end{document}